\theoremstyle{plain}
\newtheorem{thm}{Theorem}[section]
\newtheorem{lem}[thm]{Lemma}
\newtheorem{cor}[thm]{Corollary}
\newtheorem{obs}[thm]{Observation}
\newtheorem{prop}[thm]{Proposition}
\newtheorem{ques}[thm]{Question}
\theoremstyle{definition}
\newtheorem{defn}[thm]{Definition}
\newtheorem{remark}[thm]{Remark}
\newtheorem{ex}[thm]{Example}
\newcommand{\bm}[1]{\mbox{\boldmath $#1$}}
\newcommand{\mf}[1]{\mbox{$\mathfrak #1$}}
\DeclareMathOperator{\Dy}{Dyck}
\DeclareMathOperator{\lr}{lr-max}
\DeclareMathOperator{\inv}{inv}
\DeclareMathOperator{\supp}{supp}
\DeclareMathOperator{\drop}{drops}
\DeclareMathOperator{\des}{des}
\DeclareMathOperator{\exc}{exc}
\DeclareMathOperator{\Cat}{Cat}
\DeclareMathOperator{\dep}{dp}
\title{The depth of a permutation}
\author[T. K. Petersen]{T.~Kyle Petersen}
\email{tkpeters@math.depaul.edu}
\author[B. E. Tenner]{Bridget Eileen Tenner$^{\dagger}$}
\email{bridget@math.depaul.edu}
\address{Department of Mathematical Sciences, DePaul University, Chicago, IL 60614}
\thanks{$^{\dagger}$ Research partially supported by a Simons Foundation Collaboration Grant for Mathematicians.}
\subjclass[2010]{20F55, 05A05, 05A15}
\begin{document}

\begin{abstract}
For the elements of a Coxeter group, we present a statistic called \emph{depth}, defined in terms of factorizations of the elements into products of reflections. Depth is bounded above by length and below by the average of length and reflection length.  In this article, we focus on the case of the symmetric group, where we show that depth is equal to $\sum_i \max\{w(i)-i, 0\}$. We characterize those permutations for which depth equals length: these are the 321-avoiding permutations (and hence are enumerated by the Catalan numbers). We also characterize those permutations for which depth equals reflection length: these are permutations avoiding both 321 and 3412 (also known as boolean permutations, which we can hence also enumerate). In this case, it also happens that length equals reflection length, leading to a new perspective on a result of Edelman.\\

\noindent \emph{Keywords:} Coxeter group, permutation, reflection, depth
\end{abstract}

\maketitle

\section{Introduction}

There is a simple recursive sorting algorithm, called ``straight selection sort" by Knuth \cite{K}, that finds the largest misplaced value in a list and moves it to its correct position using a single transposition. The procedure is then repeated until the list is entirely sorted.  For example, $2431756$ would be sorted as follows, where the largest misplaced value at each stage is in boldface, and each transposition has been labeled by the pair of positions that it swaps.
\[ 2431\bm{7}56 \xrightarrow{(57)} 2431\bm{6}57 \xrightarrow{(56)} 2\bm{4}31567 \xrightarrow{(24)} \bm{2}134567 \xrightarrow{(12)} 1234567. \]
When the initial list is viewed as the one-line notation of a permutation, this algorithm yields a (not necessarily unique) minimal length expression for the permutation as a product of transpositions:
\[ w = 2431756 = (12)(24)(56)(57).\]
We compute the ``cost" of this algorithm in terms of the distances between the positions transposed at each step.  So in the example above, this would be \[ (2-1) + (4-2) + (6-5) + (7-5) = 6.\] In \cite{petersen 1}, this statistic was called the \emph{sorting index}. The sorting index was shown to have the same distribution over all permutations of $n$ as the inversion number; that is, it is a \emph{Mahonian} statistic (see \cite[Corollary 3]{petersen 1} and \cite{wilson}). 

While straight selection sort optimizes the number of transpositions needed to sort a list --- equivalently, to turn a permutation into the identity --- it does not necessarily optimize cost.  For example, there is an alternative route from our example $w=2431756$ to the identity:
$$2431\bm{7}56 \xrightarrow{(56)} 24315\bm{7}6 \xrightarrow{(67)} 2\bm{4}31567 \xrightarrow{(24)} \bm{2}134567 \xrightarrow{(12)} 1234567.$$
This sorting of the permutation yields
$$w=(12)(24)(67)(56),$$
and
$$(2-1) + (4-2) + (7-6) + (6-5) = 5 < 6.$$
In fact, $5$ is the lowest possible cost for this particular $w$.

The purpose of this paper is to characterize this kind of minimum sorting measure, which we call the \emph{depth} of a permutation, denoted $\dep(w)$. The term ``depth" is natural from the perspective of Coxeter groups, as we explain in Section \ref{sec:depth}. 

We will relate depth to the well understood notions of \emph{length} and \emph{reflection length}, denoted $\ell_S(w)$ and $\ell_T(w)$, respectively ($S$ refers to the set of simple reflections in the Coxeter group, $T$ refers to the set of all reflections).  For permutations, it is well known that the length is the same as the inversion number, and the reflection length is the size of the permutation minus the number of cycles.  We will show that depth is bounded above by length and below by the average of length and reflection length. 

Beyond preliminaries, we study depth primarily in the case of the symmetric group. In particular, we have the following characterization of depth.

\begin{thm}\label{thm:char}
Let $w \in \mf{S}_n$. Then its depth is given by \[ \dep(w) = \sum_{w(i) > i} (w(i)-i).\]
\end{thm}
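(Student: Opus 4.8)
My plan is to prove the identity by showing that the right-hand side, which I will write as $D(w) = \sum_{w(i) > i}(w(i)-i)$, is simultaneously a lower and an upper bound for $\dep(w)$. The starting observation is that since $\sum_i(w(i)-i) = 0$, the ``positive displacement'' $D(w)$ equals the ``negative displacement'' $\sum_{w(i)<i}(i-w(i))$, so $D(w) = \tfrac12 T(w)$ where $T(w) = \sum_i |w(i)-i|$ is the total displacement. Thus it suffices to show $\dep(w) = \tfrac12 T(w)$, and my main tool is to track how $T$ changes under multiplication by a single reflection. Recall that a reflection in $\mf{S}_n$ is a transposition $(i\,j)$ of depth $|i-j|$, and that $\dep(w)$ is the minimum of $\sum_k |i_k-j_k|$ over all factorizations $w = (i_1\,j_1)\cdots(i_m\,j_m)$.

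For the lower bound I would first prove the elementary estimate that multiplying by a transposition $t=(i\,j)$ changes the total displacement by at most $2\,\dep(t)$: the two positions (resp.\ values) it moves contribute four terms to $T$ whose net change is controlled by triangle inequalities such as $|x-i| \le |x-j| + |i-j|$, giving $|T(u) - T(ut)| \le 2|i-j|$. Given any factorization $w = t_1\cdots t_m$, I then telescope along the partial products $v_0 = w, v_1, \dots, v_m = e$, where $v_{k-1}$ and $v_k$ differ by $t_k$: since $T(e)=0$ and $T(w)=2D(w)$,
\[ 2D(w) = T(w) - T(e) = \sum_{k=1}^m \bigl(T(v_{k-1}) - T(v_k)\bigr) \le \sum_{k=1}^m 2\,\dep(t_k), \]
so every factorization has cost at least $D(w)$, whence $\dep(w) \ge D(w)$.

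For the upper bound I must exhibit one factorization meeting this bound, and equality in the telescoping forces each factor to reduce $T$ by exactly twice its cost. A direct computation shows that interchanging positions $a<b$ reduces $T$ by exactly $2(b-a)$ \emph{precisely} when the value in position $a$ is $\ge b$ and the value in position $b$ is $\le a$. The crux of the whole argument, and the step I expect to be the main obstacle, is therefore the existence statement: \emph{if $w\ne e$ then there exist positions $a<b$ with $w(a)\ge b$ and $w(b)\le a$}. I would prove this by contradiction. Assuming no such pair exists, I show the smallest non-fixed value must in fact be fixed: if $q = w^{-1}(1) > 1$, then for every $a<q$ the pair $(a,q)$ satisfies $w(q)=1\le a$, so the hypothesis forces $w(a)<q$; this makes $\{w(1),\dots,w(q-1)\} = \{1,\dots,q-1\}$, placing value $1$ at a position $<q$, a contradiction. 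Hence $w(1)=1$, and restricting to $\{2,\dots,n\}$ and iterating gives $w=e$.

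With the lemma in hand, the upper bound follows by a greedy sorting argument: while $w\ne e$, choose such a pair $(a,b)$ and multiply by $(a\,b)$, which strictly decreases $T$ (by $2(b-a)\ge 2$) and hence terminates at $e$ after finitely many steps. Reading off the transpositions in reverse yields a factorization of $w$ of total cost $\sum_k (b_k - a_k) = \tfrac12\sum_k 2(b_k-a_k) = \tfrac12 T(w) = D(w)$, so $\dep(w)\le D(w)$. Combined with the lower bound this gives $\dep(w) = D(w) = \sum_{w(i)>i}(w(i)-i)$. The only genuinely delicate point is the existence lemma; once it is established, both inequalities reduce to routine telescoping and bookkeeping.
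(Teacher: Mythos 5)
Your proof is correct, and its key construction is genuinely different from the paper's. For the upper bound, the paper runs a recursive algorithm (\textsf{SHALLOW-DECOMP}, a refinement of straight selection sort): by induction on $n$ it places the value $n$ at each step, choosing whichever of the two transpositions $t_{w^{-1}(n),n}$ (swapping positions) or $t_{w(n),n}$ (swapping values) is cheaper, and checks in two cases that the cost incurred equals the decrease in $\sum_{w(i)>i}(w(i)-i)$. Your greedy scheme instead swaps an arbitrary ``crossing pair'' $a<b$ with $w(a)\ge b$ and $w(b)\le a$, and rests on your existence lemma (proved via the position of the smallest non-fixed value), which has no analogue in the paper. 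For the lower bound, the paper offers only a one-sentence appeal to the fact that each excedance $w(i)$ must travel $w(i)-i$ places; your telescoping of the total displacement $T(w)=\sum_i|w(i)-i|$ against the estimate $|T(u)-T(ut)|\le 2\dep(t)$ is precisely the rigorous form of that remark, so there you are filling a gap rather than diverging. What the paper's route buys is a stronger conclusion: its factorization simultaneously has minimal depth and minimal reflection length $\ell'(w)$, since each step fixes the largest unplaced value and so increases the number of cycles; this is part of the paper's Theorem~\ref{thm:alg}. Your greedy swaps need not place any value in its final position, so they can use more than $\ell'(w)$ factors: for $w=3412$, repeatedly choosing the innermost crossing pair yields the four-factor decomposition $s_2s_1s_3s_2$, whereas $\ell'(w)=2$ (achieved by $t_{13}t_{24}$, which has the same total cost $4$). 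What your route buys is self-containedness and a sharper local picture: equality in your telescoping forces every factor of an optimal factorization to reduce $T$ by exactly twice its cost, and your crossing-pair computation characterizes exactly when that happens.
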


We prove Theorem \ref{thm:char} by exhibiting an algorithm, based on straight selection sort, that achieves a factorization of $w$ having both minimum depth and minimum reflection length. 

\begin{remark}(Total Displacement)\label{rem:TD}
Diaconis and Graham \cite{DG} compared the statistics of length, reflection length, and what they called ``Spearman's disarray" for permutations. The disarray statistic, which Knuth calls ``total displacement" \cite[Problem 5.1.1.28]{K}, is defined as \[ \sum_{i=1}^n |w(i)-i|=2\sum_{w(i) > i} (w(i)-i).\] Thus according to Theorem \ref{thm:char}, the displacement of a permutation is exactly twice its depth. Several of the results proved here were first discovered by Diaconis and Graham in the context of total displacement, and we will highlight these connections as they arise. A recent paper of Guay-Paquet and the first author computes the generating function for total displacement \cite{GP}.
\end{remark}

A position $i$ such that $w(i)> i$ is called an \emph{excedance} of $w$, so the measure in Theorem~\ref{thm:char} is the sum of the sizes of the excedances. It is well known that excedance number and \emph{descent number} are equidistributed over $\mf{S}_n$. (Their common generating function is the Eulerian polynomial.) In a similar vein, we find that summing the sizes of descents gives a statistic with the same distribution as depth. That is, define the \emph{descent drop} of $w$ to be \[ \drop(w) = \sum_{w(i)>w(i+1)} ( w(i) - w(i+1)).\]
(Chung, Claesson, Dukes, and Graham study descent numbers in relation to ``maximum drop size,'' that is, $\max\{ j-w(j) \}$ in \cite{CCDG}. Apart from similar terminology, we know of no deeper connection between the current work and theirs.)

It is generally not true that $\dep(w) = \drop(w)$. For example, $\dep(3241) = 2+1=3$ while $\drop(3241) = 1+3=4$. Nonetheless, we will prove the following via a bijection of Steingr\'imsson \cite[Appendix]{stein}.

\begin{thm}\label{thm:equidist}
For all $n$, the pairs of statistics $(\des,\drop)$ and $(\exc,\dep)$ are equidistributed over $\mf{S}_n$. That is, \[ \sum_{w \in \mathfrak{S}_n} q^{\drop(w)} t^{\des(w)} = \sum_{w \in \mathfrak{S}_n} q^{\tiny \dep(w)} t^{\exc(w)}.\] In particular, \[ |\{ w \in \mf{S}_n : \dep(w) =k \}| = |\{ w \in \mf{S}_n : \drop(w) = k\}|.\]
\end{thm}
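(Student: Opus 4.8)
The plan is to first invoke Theorem \ref{thm:char} to put both sides on equal footing, rewriting the right-hand side as $\sum_{w \in \mathfrak{S}_n} q^{\sum_{w(i)>i}(w(i)-i)}\,t^{\exc(w)}$. The goal then becomes the construction of a bijection $\Phi \colon \mathfrak{S}_n \to \mathfrak{S}_n$ carrying the descent data to the excedance data: concretely, I want $\exc(\Phi(w)) = \des(w)$ and $\dep(\Phi(w)) = \drop(w)$ for every $w$. Summing $q^{\drop(w)}t^{\des(w)}$ over all $w$ and reindexing by the bijection $\Phi$ then gives $\sum_\sigma q^{\dep(\sigma)}t^{\exc(\sigma)}$, and the final displayed consequence follows by specializing $t=1$. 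It is worth separating the two things $\Phi$ must accomplish. That $\des$ and $\exc$ are equidistributed is the classical fact that both are counted by the Eulerian numbers, so the existence of \emph{some} bijection matching these two coordinates is not in question; the genuine content is the refinement that $\Phi$ must simultaneously transport the size statistic $\drop$ to $\dep$.

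A useful preliminary reformulation is that $\drop$ telescopes along decreasing runs: if $w(i) > w(i+1) > \cdots > w(j)$ is a maximal strictly decreasing factor, the descents it contains contribute $w(i) - w(j)$ to $\drop(w)$, so $\drop(w)$ equals the sum over maximal decreasing runs of $(\text{top} - \text{bottom})$. I would aim for a bijection under which each such run is recorded and later re-emitted as an excedance of $\Phi(w)$, with its value $\text{top} - \text{bottom}$ reappearing as an excedance size $\Phi(w)(k) - k$. To realize this I would use the bijection of Steingr\'imsson \cite[Appendix]{stein}, so that the argument reduces to checking the two properties above. The first coordinate is exactly the Eulerian equidistribution that his map is designed to witness; for the second I would track, block by block, how the map encodes the tops and bottoms of the descending runs of $w$ and how those quantities are decoded into the excedance tops and positions of $\Phi(w)$. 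The telescoped form of $\drop$ is what makes such a run-by-run accounting feasible, since it expresses $\drop(w)$ in terms of a small set of marked values rather than all the pairwise differences.

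The main obstacle is precisely this second verification: proving that $\Phi$ preserves not merely the \emph{count} of descents/excedances but the \emph{total} drop/depth, i.e.\ $\dep(\Phi(w)) = \drop(w)$. Matching the counts is routine, but the refined equality requires a careful, position-sensitive bookkeeping to confirm that no size is lost or double-counted as descending runs are converted into excedances. I would organize this around the run decomposition: verify it first on permutations consisting of a single decreasing run, where $\drop = \text{top} - \text{bottom}$ is a single number and the image should carry a single excedance of exactly that size, and then argue the general case additively across runs. The text's observation that $\dep(3241) = 3 \neq 4 = \drop(3241)$ is a useful reminder that $\Phi$ cannot be the identity, so the verification must genuinely follow the nontrivial action of the bijection rather than reducing to a coincidence of statistics on each $w$.
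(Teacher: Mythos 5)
You correctly identify the paper's bijection (Steingr\'imsson's map $\phi$ from \cite{stein}) and the right two conditions to verify, but your plan for the crucial second verification rests on a false picture of how the map acts---and, worse, on a picture that \emph{no} valid bijection could realize. You propose to telescope $\drop$ along maximal decreasing runs and to have each run ``re-emitted as a single excedance of size $\text{top} - \text{bottom}$.'' But a run containing $d \geq 2$ descents contributes $d$ to $\des(w)$, so if it were converted to one excedance, the image would have fewer excedances than $w$ has descents, contradicting your own first requirement $\exc(\Phi(w)) = \des(w)$. Your proposed base case already fails: $w = 321$ is a single decreasing run with $\des(w) = 2$ and $\drop(w) = 2$, so its image under any such bijection must have \emph{two} excedances; it cannot ``carry a single excedance of exactly that size.'' (Concretely, $\phi(321) = 231$, which has two excedances, each of size $1$.)

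The paper's verification is finer than run-by-run: it is descent-by-descent, and needs no telescoping. The structural fact, stated as Lemma \ref{lem:dex} and built into the construction of $\phi$, is that for $v = \phi(w)$ each descent $w(i) > w(i+1)$ of $w$ corresponds to the excedance of $v$ in which the value $w(i)$ sits at position $w(i+1)$, i.e.\ $v(w(i+1)) = w(i)$, and every excedance of $v$ arises this way. Hence each \emph{individual} drop $w(i) - w(i+1)$ equals the corresponding \emph{individual} excedance size $v(w(i+1)) - w(i+1)$ exactly, and summing over descents gives the result in one line:
$$\drop(w) = \sum_{w(i)>w(i+1)} \bigl(v(w(i+1)) - w(i+1)\bigr) = \sum_{v(k)>k} \bigl(v(k)-k\bigr) = \dep(v),$$
using Theorem \ref{thm:char} to identify the last sum with depth. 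So the ``position-sensitive bookkeeping'' you flag as the main obstacle is precisely what Lemma \ref{lem:dex} delivers; to repair your argument you should discard the run decomposition entirely and instead extract (or prove) this per-descent statement from Steingr\'imsson's construction.
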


We are also able to characterize, in terms of pattern avoidance, those permutations for which the depth equals length and those for which depth equals reflection length. Diaconis and Graham include these results without proof as Remarks 3 and 4 of \cite{DG}. 

\begin{thm}\label{thm:extremes}
Let $w \in \mf{S}_n$. Then, \begin{itemize}
\item $\dep(w) = \ell_S(w)$ if and only if $w$ avoids the pattern $321$, and
\item $\ell_T(w) = \dep(w)$ if and only if $w$ avoids the patterns $321$ and $3412$.
\end{itemize}
In particular, there are Catalan-many permutations for which $\dep(w) = \ell_S(w)$, and odd-indexed-Fibonacci-many permutations for which $\ell_T(w) = \dep(w)$.
\end{thm}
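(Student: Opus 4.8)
The plan for the first bullet is a termwise comparison of $\dep$ and $\ell$. For each position $i$, set $a_i=\#\{j>i:w(j)<w(i)\}$ and $b_i=\#\{j<i:w(j)>w(i)\}$, so that $a_i$ counts the inversions in which $i$ is the larger-valued left entry and $b_i$ the inversions in which $i$ is the smaller-valued right entry. A direct count of the values below $w(i)$ and the positions below $i$ gives $w(i)-i=a_i-b_i$, whence $\dep(w)=\sum_i\max\{a_i-b_i,0\}$ by Theorem~\ref{thm:char}, while $\ell(w)=\inv(w)=\sum_i a_i$. Since $a_i-\max\{a_i-b_i,0\}=\min\{a_i,b_i\}$, I obtain the identity $\ell(w)-\dep(w)=\sum_i\min\{a_i,b_i\}$. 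Every term is nonnegative (reproving $\dep\le\ell$), and the sum is zero iff $\min\{a_i,b_i\}=0$ for all $i$. As $a_i>0$ and $b_i>0$ together mean some $j<i<k$ have $w(j)>w(i)>w(k)$, i.e.\ $i$ is the middle of a $321$ pattern, equality $\dep(w)=\ell(w)$ holds exactly for the $321$-avoiding $w$, whose Catalan enumeration is classical.

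For the second bullet I would first reduce to length. By definition $\dep(w)$ is the least total displacement $\sum(j-i)$ over factorizations of $w$ into transpositions $(i\,j)$, each summand being at least $1$; as any factorization uses at least $\ell'(w)$ transpositions, $\dep(w)\ge\ell'(w)$, with equality iff some optimal factorization uses exactly $\ell'(w)$ transpositions all of cost $1$, that is, $\ell'(w)$ adjacent transpositions. Such an expression writes $w$ as a product of $\ell'(w)$ simple reflections, forcing $\ell(w)\le\ell'(w)$ and hence $\ell(w)=\ell'(w)$; the converse is immediate from $\ell'\le\dep\le\ell$. Thus $\dep(w)=\ell'(w)$ iff $\ell(w)=\ell'(w)$, which is the promised new view of Edelman's result, and it remains to characterize $\ell(w)=\ell'(w)$ by pattern avoidance.

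If $w$ contains $321$ then the first bullet gives $\dep(w)<\ell(w)$, so $\dep(w)\ne\ell'(w)$ by the previous step and $\ell(w)\ne\ell'(w)$; hence equality forces $321$-avoidance. To bring in $3412$ I would use that both $\dep$ and $\ell'$ are additive over the cycles of $w$. For a cycle $C$ on positions $p_1<\cdots<p_s$, writing each excedance's contribution $w(p)-p$ as the number of integer cuts $t$ it crosses, and noting that a single cycle must cross every cut with $p_1\le t<p_s$ at least once, I get $\dep(C)=\sum_t x_t\ge p_s-p_1\ge s-1$, where $x_t$ counts the excedances crossing $t$. Consequently $\dep(w)=\ell'(w)$ iff every cycle occupies an interval of positions and crosses each intermediate cut exactly once; equivalently, $w$ is a direct sum $\gamma_1\oplus\cdots\oplus\gamma_m$ of such \emph{narrow} single cycles. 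Since neither $321$ nor $3412$ can straddle a summand of a direct sum, pattern avoidance also localizes to the blocks, and the proof reduces to showing that a single interval-cycle is narrow iff it avoids $321$ and $3412$.

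The main obstacle is exactly this last equivalence, the part involving $3412$. In the cleaner direction, a cut crossed twice yields low-to-high entries $p<p'$ and high-to-low entries $q<q'$ with $p<p'<q<q'$; if either pair is itself decreasing one finds $321$, and otherwise the four entries realize precisely the relative order $3412$. The reverse implication and the support-gap configuration are where I expect to spend the most care: when a cycle skips a value one must produce a $3412$ using an entry of a neighboring cycle lying in the gap, and conversely one must verify that a narrow interval-cycle cannot hide either pattern. Granting this, the enumeration follows by counting narrow $s$-cycles, which I expect to be $2^{s-2}$ for $s\ge2$; summing over compositions of $n$ gives the generating function $\frac{1-2x}{1-3x+x^2}$, whose coefficients are the odd-indexed Fibonacci numbers $F_{2n-1}$.
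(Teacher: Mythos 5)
Your first bullet is proved completely and correctly, by a route genuinely different from the paper's. The paper (Theorem~\ref{thm:hequalsl}) fibers $\mf{S}_n$ over Dyck paths via left-right maxima and compares each fiber's unique minimal-length element to the rest of its fiber; your identity $\ell(w)-\dep(w)=\sum_i\min\{a_i,b_i\}$ is correct ($a_i-b_i=w(i)-i$ follows by counting the values below $w(i)$ on each side of position $i$, and $\ell(w)=\sum_i a_i$), and it compresses the paper's two inequalities into one exact formula, from which ``equality iff no index is the middle letter of a $321$'' is immediate. Indeed, the paper's Lemma~\ref{obs:fiber} is exactly your inequality $a_i\ge w(i)-i$ with equality iff $b_i=0$, so your computation refines the paper's argument while avoiding the Dyck-path machinery (at the cost of not producing that bijection). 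Your reduction of the second bullet to the statement $\ell(w)=\ell'(w)$ is also correct, and is precisely the paper's Observation~\ref{obs:2}.

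The genuine gap is in the remaining equivalence, $\ell(w)=\ell'(w)$ iff $w$ avoids $321$ and $3412$ (the paper's Theorem~\ref{thm:equivalent lengths}), and you flag it yourself (``Granting this\dots''). Your crossing framework gives $\dep(w)=\sum_t x_t$, and equality with $\ell'(w)=\sum_C(|C|-1)$ forces two things: every cut is crossed exactly once \emph{and} every cycle's support is an interval. Your four-point argument (a doubly crossed cut yields $321$ or $3412$) is correct, but it only rules out double crossings; it produces nothing when a cycle's support has a gap, since a gapped cycle can coexist with single crossings (e.g.\ $w=321$, whose cycle $(1\,3)$ skips $2$, crosses each of the two cuts exactly once). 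So the direction ``avoids $321$ and $3412$ implies $\dep(w)=\ell'(w)$'' is missing its key step: a cycle whose support is not an interval forces an occurrence of $321$ or $3412$, necessarily involving an entry of a different cycle sitting in the gap. This is exactly the content of the paper's Lemma~\ref{lem:boolean means cycles are blocks}, which the paper derives from reduced-word technology (Lemmas~\ref{lem:patterns and factors} and~\ref{lem:boolean perm}) rather than from a short count, and it is a real missing idea, not a routine verification. By contrast, your other deferred step is easy to finish inside your framework: if a narrow permutation contained $3412$ at positions $i<j<k<l$ with $w(k)<w(l)<w(i)<w(j)$, then narrowness forces $w(i)\le j$ (else any cut $t$ with $j\le t<w(i)$ is up-crossed at both $i$ and $j$) and $k\le w(l)$ (else any cut $t$ with $w(l)\le t<k$ is down-crossed at both $k$ and $l$), giving $k\le w(l)<w(i)\le j$ and contradicting $j<k$; and $321$-avoidance is free from your first bullet once all three statistics coincide. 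Until the support-gap step is supplied, however, both the characterization and your closing enumeration ($2^{s-2}$ narrow $s$-cycles, hence $F_{2n-1}$) remain unproved.
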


The case $\ell_T(w) = \dep(w)$ also turns out to imply $\ell_T(w)=\ell_S(w)$, as discussed in Observation \ref{obs:2}. The permutations for which $\ell_T(w) = \ell_S(w)$ were first characterized by Edelman \cite[Theorem 3.1]{edelman}. Here we see Edelman's ``unimodal" permutations in a slightly different guise, following work of the second author \cite{tenner patt-bru, tenner rdpp}, which provides a dictionary for translating between one-line notation of a permutation and reduced decompositions. The permutations for which $\dep(w) = \ell_S(w)$ are characterized directly; we also have a bijection with Dyck paths to help make the characterization more intuitive.

The in-depth discussion of the case of permutations begins in Section \ref{sec:perm}. In particular, Theorem \ref{thm:char} is proved in Section \ref{sec:char} and Theorem \ref{thm:equidist} is proved in Section \ref{sec:equidist}. Theorem \ref{thm:extremes} is a summary of Theorems~\ref{thm:hequalsl} and~\ref{thm:equivalent lengths}, proved in Sections \ref{sec:hequalsl} and \ref{sec:lequalsl}, respectively.

As motivation for comparing the three statistics $\ell_T$, $\dep$, and $\ell_S$, we compare their values for elements of $\mf{S}_3$ and $\mf{S}_4$ in Table \ref{tab:llh}.

\begin{table}
$$
\begin{array}{c}
\begin{array}{c||c|c|c}
w \in \mf{S}_3 =A_2 & \ell_T(w) & \dep(w) & \ell_S(w)\\
\hline
123 & 0 & 0 & 0\\
213 & 1 & 1 & 1\\
132 & 1 & 1 & 1\\
312 & 2 & 2 & 2\\
231 & 2 & 2 & 2\\
321 & 1 & 2 & 3
\end{array} \\
\\
\begin{array}{c||c|c|c}
w \in G_2 & \ell_T(w) & \dep(w) & \ell_S(w)\\
\hline
e & 0 & 0 & 0\\
s_1 & 1 & 1 & 1\\
s_2 & 1 & 1 & 1\\
s_1s_2 & 2 & 2 & 2\\
s_2s_1 & 2 & 2 & 2\\
s_1s_2s_1 & 1 & 2 & 3\\
s_2s_1s_2 & 1 & 2 & 3\\
s_1s_2s_1s_2 & 2 & 3 & 4\\
s_2s_1s_2s_1 & 2 & 3 & 4\\
s_1s_2s_1s_2s_1 & 1 & 3 & 5\\
s_2s_1s_2s_1s_2 & 1 & 3 & 5\\
s_1s_2s_1s_2s_1s_2 & 2 & 4 & 6
\end{array}
\end{array}
\qquad
\begin{array}{c||c|c|c}
w \in \mf{S}_4 & \ell_T(w) & \dep(w) & \ell_S(w)\\
\hline
1234 & 0 & 0 & 0\\
2134 & 1 & 1 & 1\\
1324 & 1 & 1 & 1\\
1243 & 1 & 1 & 1\\
2314 & 2 & 2 & 2\\
2143 & 2 & 2 & 2\\
3124 & 2 & 2 & 2\\
1342 & 2 & 2 & 2\\
1423 & 2 & 2 & 2\\
3214 & 1 & 2 & 3\\
1432 & 1 & 2 & 3\\
2341 & 3 & 3 & 3\\
2413 & 3 & 3 & 3\\
3142 & 3 & 3 & 3\\
4123 & 3 & 3 & 3\\
3241 & 2 & 3 & 4\\
2431 & 2 & 3 & 4\\
4132 & 2 & 3 & 4\\
4213 & 2 & 3 & 4\\
3412 & 2 & 4 & 4\\
4231 & 1 & 3 & 5\\
4312 & 3 & 4 & 5\\
3421 & 3 & 4 & 5\\
4321 & 2 & 4 & 6
\end{array}
$$
\caption{Reflection length, depth, and length for the elements of the groups $\mf{S}_3=A_2$, $G_2$, and $\mf{S}_4=A_3$.  Note that each element $w$ satisfies the inequalities $(\ell_T(w)+\ell_S(w))/2 \le \dep(w) \le \ell_S(w)$.}\label{tab:llh}
\end{table}

\section{Depth in a Coxeter group}\label{sec:depth}

We assume some basic knowledge of Coxeter groups in this section. See Humphreys \cite{H} for facts about Coxeter groups that are not explained here. The reader interested only in permutation statistics can safely skip to Section \ref{sec:perm}.

\subsection{Depth in terms of words}

Let $W$ be a Coxeter group, with simple reflection set $S$. The set of all reflections is the set of conjugates of the simple reflections, denoted $T = \{ wsw^{-1}: w \in W, s \in S\}$. There are two common measures associated to any element $w \in W$. Its \emph{length}, $\ell_S(w)$, is the minimal number of simple reflections needed to express $w$, while its \emph{reflection length}, $\ell_T(w)$, is the minimal number of not-necessarily-simple reflections:
\begin{align*}
 \ell_S(w) &= \min\{k : w = s_{1}\cdots s_{i}, s_i \in S\}, \\
  \ell_T(w) &= \min\{ k : w = t_1 \cdots t_k, t_i \in T\}.
\end{align*}

Decompositions of the form $w = s_1\cdots s_{\ell_S(w)}$ with $s_i \in S$ or $w = t_1 \cdots t_{\ell_T(w)}$ with $t_i \in T$ are called \emph{reduced decompositions} (with respect to $S$ or with respect to $T$).

Now let $\Phi = \Pi \cup -\Pi$ denote the root system associated to $W$, and let $\Delta=\{ \alpha_1, \alpha_2, \ldots \}$ denote the \emph{simple roots}. Following \cite{BB} (see also \cite{stemQuo}), the \emph{depth} of a positive root $\beta \in \Pi$, denoted $\dep(\beta)$, is \[ \dep(\beta) = \min\{ k : s_1\cdots s_k(\beta) \in -\Pi,  s_i \in S\}.\] It is always the case that $\dep(\beta)\geq 1$, with $\dep(\beta) = 1$ if and only if $\beta \in \Delta$ is a simple root.

There is a one-to-one correspondence between reflections and positive roots: the reflection $t_{\beta}$ is the reflection across the hyperplane whose normal vector is $\beta$. In particular, $t_{\beta}(\beta) = -\beta$. Now, for any group element $w \in W$, we define its depth to be the minimum sum of depths in a factorization of $w$ into reflections. That is, \[ \dep(w) = \min \left\{ \sum_i \dep(\beta_i) : w = t_{\beta_1} \cdots t_{\beta_k}, t_{\beta_i} \in T\right\}.\]

We can see that, by definition, \[ 2\dep(\beta)=\ell_S(t_{\beta})+1.\] It then follows that \[ \dep(t_{\beta}) = \dep(\beta) = \frac{\ell_S(t_{\beta}) + 1}{2}, \] since if \[ t_{\beta} = t_{\gamma_1}\cdots t_{\gamma_k},\] is any factorization of $t_{\beta}$ as a product of reflections, then 
\begin{align*} 
2(\dep(\gamma_1) + \cdots +\dep(\gamma_k)) &= \ell_S(t_{\gamma_1}) + \cdots + \ell_S(t_{\gamma_k}) + k,\\
&\geq \ell_S(t_{\beta}) + k,\\
&\geq 2\dep(\beta) + k-1,\\
&\geq 2\dep(\beta).
\end{align*}
In fact, this shows that any expression for $t_{\beta}$ using more than one reflection forces a factorization of greater depth.

We can now describe depth solely in terms of the lengths of the reflections in a reflection factorization of $w$.

\begin{obs}\label{obs:dl}
The depth of $w$ is given by 
\begin{align*}
 \dep(w) &= \min\left\{ \sum_{i=1}^k \dep(t_i) : w = t_1\cdots t_k, \text{ where } t_i \in T \right\}, \\
 &= \frac{1}{2}\min\left\{ k + \sum_{i=1}^k \ell_S(t_i) : w = t_1\cdots t_k, \text{ where } t_i \in T\right\}.
 \end{align*}
\end{obs}

From any reflection factorization $w = t_1\cdots t_k$, we get $\ell_T(w) \leq k$ and $\ell_S(w) \leq \sum_{i=1}^k \ell_S(t_i)$.  Thus Observation~\ref{obs:dl} gives $\dep(w) \geq (\ell_T(w) + \ell_S(w))/2$.  Moreover, if $w= s_1\cdots s_{\ell_S(w)}$ is a reduced factorization in $S$, then $\dep(w) \leq (\ell_S(w) + \sum_{i=1}^{\ell_S(w)} \ell_S(s_i))/2 = \ell_S(w)$. In summary, we have the following.

\begin{obs}\label{obs:1}
The depth of an element $w$ satisfies \[ \frac{\ell_T(w)+\ell_S(w)}{2} \leq \dep(w) \leq \ell_S(w).\]
\end{obs}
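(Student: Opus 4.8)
The plan is to read off both inequalities directly from the reformulation in Observation~\ref{obs:dl}, namely
\[
\dep(w) = \frac{1}{2}\min\left\{ k + \sum_{i=1}^k \ell(t_i) : w = t_1\cdots t_k,\ t_i \in T\right\}.
\]
Both bounds will come from estimating the bracketed quantity $k + \sum_i \ell(t_i)$, but in two opposite ways: the lower bound by separating the roles of the summand $k$ (the number of reflections, controlled by reflection length) and the summand $\sum_i \ell(t_i)$ (the total simple-reflection length of those reflections), and the upper bound by simply exhibiting one convenient factorization.

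For the lower bound $\ell'(w) \le \dep(w)$, I would use that every reflection $t \in T$ is nontrivial and hence satisfies $\ell(t) \ge 1$. Thus in any reflection factorization $w = t_1 \cdots t_k$ we have $\sum_{i=1}^k \ell(t_i) \ge k$, so the bracketed quantity is at least $2k$. Since $k \ge \ell'(w)$ for every such factorization (reflection length being by definition the minimum number of reflections), the minimum of $k + \sum_i \ell(t_i)$ is at least $2\ell'(w)$, and dividing by $2$ yields $\dep(w) \ge \ell'(w)$.

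For the upper bound $\dep(w) \le \ell(w)$, rather than minimize I would simply test one factorization: take a reduced decomposition $w = s_1 \cdots s_{\ell(w)}$ in the simple reflections. Each $s_i \in S \subseteq T$ is in particular a reflection with $\ell(s_i) = 1$, so this is an admissible factorization with $k = \ell(w)$ and $\sum_i \ell(s_i) = \ell(w)$, making the bracketed quantity equal to $2\ell(w)$. Since the minimum can only be smaller, $\dep(w) \le \tfrac{1}{2}(2\ell(w)) = \ell(w)$.

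I do not expect a genuine obstacle here: the substance of the statement is carried entirely by Observation~\ref{obs:dl} (and the identity $2\dep(\beta) = \ell(t_\beta)+1$ underlying it), after which the two bounds are elementary. The only point deserving care is that the lower bound truly requires both summands working together — the estimate $\sum_i \ell(t_i) \ge k$ is what converts the reflection count into the total length, so that the two copies of $k$ combine to give $2\ell'(w)$ and not merely $\ell'(w)$.
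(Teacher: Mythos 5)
Your proof is correct and follows essentially the same route as the paper: both bounds are read off from Observation~\ref{obs:dl}, using $\ell(t_i) \geq 1$ together with $k \geq \ell'(w)$ for the lower bound, and testing a reduced decomposition into simple reflections for the upper bound. Your write-up just makes explicit the bookkeeping that the paper leaves to the reader.
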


Diaconis and Graham prove the equivalent result for total displacement of a permutation \cite[Theorem 2]{DG}. See also \cite[Problem 5.1.1.28]{K}.

Another easy observation follows from noticing that a factorization of $w$ can never have fewer than $\ell_T(w)$ terms, and the minimum depth of a term is $1$. Thus if depth and reflection length are equal for $w$, then the depth of every term is actually equal to $1$. In other words, the factorization consists entirely of simple reflections, and thus length equals reflection length. Together with Observation \ref{obs:1}, we have the following.

\begin{obs}\label{obs:2}
For $w \in W$, we have $\dep(w) = \ell_T(w)$ if and only if $\ell_S(w) = \ell_T(w).$
\end{obs}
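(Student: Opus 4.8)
The plan is to prove both directions of the biconditional, leaning on the bounds of Observation \ref{obs:1} for one implication and on the explicit depth-versus-length description for the other. The direction $\ell(w) = \ell'(w) \Rightarrow \dep(w) = \ell'(w)$ requires essentially no work: Observation \ref{obs:1} already gives $\ell'(w) \le \dep(w) \le \ell(w)$, so when the outer two quantities coincide the chain of inequalities collapses and $\dep(w) = \ell'(w)$ follows at once.

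The content is in the converse, $\dep(w) = \ell'(w) \Rightarrow \ell(w) = \ell'(w)$. I would start from a reflection factorization $w = t_1 \cdots t_k$ realizing the minimum in the definition of depth, so that $\dep(w) = \sum_{i=1}^k \dep(t_i)$. Two elementary inputs then combine: each reflection satisfies $\dep(t_i) \ge 1$, and any reflection factorization has at least $\ell'(w)$ terms, so $k \ge \ell'(w)$. Chaining these gives
\[ \ell'(w) = \dep(w) = \sum_{i=1}^k \dep(t_i) \ge k \ge \ell'(w), \]
which forces every inequality to be an equality. In particular, $\sum_i \dep(t_i) = k$ compels $\dep(t_i) = 1$ for each $i$, while $k = \ell'(w)$.

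The final step is to interpret $\dep(t_i) = 1$: a positive root $\beta$ has $\dep(\beta) = 1$ precisely when $\beta$ is simple, equivalently when $\ell(t_\beta) = 1$, so each $t_i$ is a simple reflection. Thus $w = t_1 \cdots t_k$ expresses $w$ as a product of $k = \ell'(w)$ simple reflections, whence $\ell(w) \le \ell'(w)$; combined with the general bound $\ell'(w) \le \ell(w)$ from Observation \ref{obs:1}, this yields $\ell(w) = \ell'(w)$. I anticipate no genuine obstacle, since the proof is bookkeeping over inequalities already assembled above; the only point meriting a moment's care is the equivalence between $\dep(t_i) = 1$ and $t_i$ being simple, which rests on the previously noted fact that a positive root has depth one exactly when it is a simple root.
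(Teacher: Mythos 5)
Your proof is correct and follows essentially the same route as the paper: the forward direction collapses the chain $\ell'(w) \le \dep(w) \le \ell(w)$ from Observation \ref{obs:1}, and the converse takes a depth-minimizing reflection factorization, uses $\dep(t_i) \ge 1$ together with the fact that any factorization has at least $\ell'(w)$ terms to force every factor to have depth $1$, hence to be simple, so that $\ell(w) \le \ell'(w)$. This is precisely the argument sketched in the paragraph preceding the observation in the paper.
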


\begin{remark}[Depth is additive]
If $W = W_1 \times W_2$ is reducible, then all the reflections in $W_1$ commute with the reflections in $W_2$. Thus if $w = uv$ with $u \in W_1$, $v \in W_2$, then $\dep(w) = \dep(u)+\dep(v)$. Hence, to characterize depth, it suffices to study irreducible Coxeter groups.
\end{remark}

\begin{remark}[Depth versus height]
Recall that every root is a linear combination of simple roots, $\sum_i c_i \alpha_i$, and the \emph{height} of a root is $\sum_i c_i$. Thus, one can then assign a minimum ``height" to any reflection factorization just as we have assigned a depth. 

Whenever $\Phi$ is a finite crystallographic root system, depth and height agree for short roots. If $\Phi$ is a root system of type $A_n$, then all the roots are the same length and so the depth of a root is the same as its height. Thus, we could just as easily have called the permutation statistic we study in Sections~\ref{sec:perm} and~\ref{sec:coincidence} the ``height" of a permutation.  

However, if $\Phi$ has both long and short roots, then depth and height need not agree. Indeed, we believe that depth is the better behaved of the two statistics, since it need not even be the case that the height of a reflection $t$ be the same as the height of its corresponding positive root. (The reflection $s_1s_2s_1$ in $G_2$ corresponds to a root of height 4, for example.) Moreover, depth makes sense for all root systems, not only crystallographic ones. Observation~\ref{obs:dl} shows that depth is independent of the choice of root system; that is, it depends only on the Coxeter system $(W,S)$.
\end{remark}

\subsection{Depth in terms of the Bruhat graph}

Length, reflection length, and depth, can all be understood in terms of the \emph{Bruhat graph} of a Coxeter group. The Bruhat graph of $W$ is the graph whose vertices are elements of $W$, with a directed edge $u \to v$ if and only if $v = ut$ for some $t \in T$ and $\ell_S(u) < \ell_S(v)$. We will augment this graph slightly by assigning the weight $\dep(\beta)$ to the edge $u \to ut_{\beta}$. The edge-weighted Bruhat graphs of $A_2 = \mf{S}_3$ and $G_2$ are presented in Figure~\ref{fig:Bruhatgraphs}.

\begin{figure}
\[\begin{tikzpicture}[node distance=.45cm,>=stealth',bend angle=30]
\tikzstyle{state}=[draw=white,minimum size=4mm]
\tikzstyle{state2}=[draw,fill=white,sloped,scale=.75]
\node[state] (123) {$e$};
\node[state] (213) [above left=of 123,xshift=-10mm,yshift=10mm] {$s_1$};
\node[state] (132) [above right=of 123,xshift=10mm,yshift=10mm] {$s_2$};
\node[state] (312) [above=of 213,yshift=10mm] {$s_1s_2$};
\node[state] (231) [above=of 132,yshift=10mm] {$s_2s_1$};
\node[state] (321) [above right=of 312,xshift=-5mm,yshift=10mm] {$s_1s_2s_1=s_2s_1s_2$};
\path[->] (123) edge node[state2] {$1$} (213);
\path[->] (123) edge node[state2] {$1$} (132);
\path[->] (123) edge node[pos=.2,state2] {$2$} (321);
\path[->] (213) edge node[state2] {$1$} (312);
\path[->] (213) edge node[pos=.2,state2]  {$2$} (231);
\path[->] (132) edge node[pos=.2,state2] {$2$} (312);
\path[->] (132) edge node[state2] {$1$} (231);
\path[->] (312) edge node[state2] {$1$} (321);
\path[->] (231) edge node[state2] {$1$} (321);
\end{tikzpicture}
\begin{tikzpicture}[node distance=.9cm,>=stealth']  
\tikzstyle{state}=[draw=white,minimum size=4mm]
\tikzstyle{state2}=[draw,fill=white,sloped,scale=.5]
\node[state] (e) {$e$};
\node[state] (1) [above left=of e,xshift=-15mm] {$s_1$};
\node[state] (2) [above right=of e,xshift=15mm] {$s_2$};
\node[state] (12) [above left=of 1,yshift=10mm] {$s_1s_2$};
\node[state] (21) [above right=of 2,yshift=10mm] {$s_2s_1$};
\node[state] (121) [above=of 12,xshift=-10mm,yshift=10mm] {$s_1s_2s_1$};
\node[state] (212) [above=of 21,xshift=10mm,yshift=10mm] {$s_2s_1s_2$};
\node[state] (1212) [above=of 121,xshift=10mm,yshift=10mm]{$s_1s_2s_1s_2$};
\node[state] (2121) [above=of 212,xshift=-10mm,yshift=10mm] {$s_2s_1s_2s_1$};
\node[state] (12121) [above right=of 1212,xshift=-10mm, yshift=10mm] {$s_1s_2s_1s_2s_1$};
\node[state] (21212) [above left=of 2121,xshift=10mm, yshift=10mm] {$s_2s_1s_2s_1s_2$};
\node[state] (121212) [above right=of 12121,xshift=-16.5mm] {$s_1s_2s_1s_2s_1s_2 = s_2s_1s_2s_1s_2s_1$};
\path[->] (e) edge node[state2] {$1$} (1);
\path[->] (e) edge node[state2] {$1$} (2);
\path[->] (e) edge node[pos=.16,state2] {$2$} (121);
\path[->] (e) edge node[pos=.16,state2] {$2$} (212);
\path[->] (e) edge node[pos=.12,state2] {$3$} (12121);
\path[->] (e) edge node[pos=.12,state2] {$3$} (21212);
\path[->] (1) edge node[state2] {$1$} (12);
\path[->] (1) edge node[pos=.2,state2] {2} (21);
\path[->] (1) edge node[pos=.16,state2] {2} (1212);
\path[->] (1) edge node[pos=.2,state2] {3} (2121);
\path[->] (1) edge node[pos=.2,state2] {3} (121212);
\path[->] (2) edge node[state2] {$1$} (21);
\path[->] (2) edge node[pos=.16,state2] {2} (2121);
\path[->] (2) edge node[pos=.2,state2] {2} (12);
\path[->] (2) edge node[pos=.2,state2] {3} (121212);
\path[->] (2) edge node[pos=.2,state2] {3} (1212);
\path[->] (12) edge node[state2] {1} (121);
\path[->] (12) edge node[pos=.2,state2] {2} (12121);
\path[->] (12) edge node[pos=.2,state2] {3} (21212);
\path[->] (12) edge node[pos=.16,state2] {3} (212);
\path[->] (21) edge node[state2] {1} (212);
\path[->] (21) edge node[pos=.2,state2] {2} (21212);
\path[->] (21) edge node[pos=.16,state2] {3} (121);
\path[->] (21) edge node[pos=.2,state2] {3} (12121);
\path[->] (121) edge node[state2] {1} (1212);
\path[->] (121) edge node[pos=.16,state2] {2} (121212);
\path[->] (121) edge node[pos=.2,state2] {3} (2121);
\path[->] (212) edge node[state2] {1} (2121);
\path[->] (212) edge node[pos=.2,state2] {3} (1212);
\path[->] (212) edge node[pos=.16,state2] {2} (121212);
\path[->] (1212) edge node[state2] {1} (12121);
\path[->] (1212) edge node[pos=.27,state2] {2} (21212);
\path[->] (2121) edge node[state2] {1} (21212);
\path[->] (2121) edge node[pos=.27,state2] {2} (12121);
\path[->] (12121) edge node[state2] {1} (121212);
\path[->] (21212) edge node[state2] {1} (121212);
\end{tikzpicture}
\]
\caption{The edge-weighted Bruhat graphs of type $A_2$ and $G_2$.}\label{fig:Bruhatgraphs}
\end{figure}

By construction, any directed path from the identity $e$ to $w$ defines a reflection factorization of $w$. While it is not quite immediate from definitions, Dyer \cite{Dyer} shows that $\ell_T(w)$ is equal to the minimal number of edges in such a path. (What is clear is that $\ell_T(w)$ is the minimal number of edges in an \emph{undirected} path.) More easily shown is that the maximal length of a directed path from the identity to $w$ is $\ell_S(w)$.

In terms of the Bruhat graph, then, the depth of an element $w$ is the minimum weight of an undirected path from the identity $e$ to $w$ in the weighted Bruhat graph. Moreover, it appears that we can choose the path to be a directed path that has minimal length, $\ell_T(w)$.

\begin{ques}\label{q:path}
Is it true that \[\dep(w) = \min\left\{\sum_{i=1}^k d_i : \mbox{ there is a path } e \xrightarrow{d_1} \cdots \xrightarrow{d_k} w \mbox{ in the Bruhat graph}\right\}? \] And if so, can the path be chosen so that it has  $\ell_T(w)$ edges?
\end{ques}

The answer is yes for the symmetric group, as we will demonstrate with an algorithm in Section \ref{sec:char}, and it is easily verified in the dihedral case as we now discuss.

\subsection{Depth for dihedral groups}

For dihedral groups, depth is straightforward. Let $I_2(m)$ denote the dihedral group of order $2m$, for $m \leq \infty$.  Let $S = \{s_1,s_2\}$ denote the simple reflections.

\begin{prop}
For an element $w \in I_2(m)$, we have
$$ \dep(w) = \left\lceil \frac{\ell_S(w)+1}{2}\right\rceil.$$
Hence, \[ \sum_{w \in I_2(m)} q^{\ell_S(w)}t^{\dep(w)} =\begin{cases}
\displaystyle 1+2qt+q^mt^{\frac{m}{2} +1} + 2(1+q)t\sum_{i=1}^{\frac{m}{2}-1} q^{2i}t^i & \mbox{if $m$ is even,}\\
\displaystyle 1+2qt+q^{m-1}t^{\frac{m+1}{2}}(2+q) + 2(1+q)t\sum_{i=1}^{\frac{m-3}{2}} q^{2i}t^i & \mbox{if $m$ is odd, and}\\
\displaystyle 1+2qt\cdot\frac{1+qt}{1-q^2t} & \mbox{if $m=\infty$.}
\end{cases}
\]
\end{prop}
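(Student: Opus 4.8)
The plan is to prove the depth formula first by splitting on the parity of $\ell(w)$, and then to read off the generating function by a direct sum over the (completely understood) length distribution of $I_2(m)$. Throughout I would use the standard fact that the nonidentity elements of $I_2(m)$ of odd length are precisely the reflections, while those of even length are the nontrivial rotations.

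If $\ell(w)$ is odd, then $w = t_\beta$ is itself a reflection, and the computation preceding Observation \ref{obs:dl} already gives $\dep(w) = \dep(t_\beta) = (\ell(w)+1)/2$, with no multi-reflection factorization doing better; so the odd case is immediate. The even case (with $w \neq e$; note $\dep(e)=0$ must be handled separately, the displayed formula being intended for $w \neq e$) is where the real argument lies. Write $\ell(w) = 2j$. For the lower bound I would invoke Observation \ref{obs:dl}: in any reflection factorization $w = t_1 \cdots t_k$ one has $\sum_i \ell(t_i) \geq \ell(w) = 2j$ by subadditivity of length, while $k \geq 2$ because $w$ is a rotation different from $e$ and hence not a reflection (equivalently, a product of reflections equal to a rotation uses an even, thus $\geq 2$, number of them, since each reflection has determinant $-1$). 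Therefore $k + \sum_i \ell(t_i) \geq 2 + 2j$, giving $\dep(w) \geq j+1$. For the matching upper bound I would exhibit one efficient factorization: from a reduced word $w = s_{i_1}s_{i_2}\cdots s_{i_{2j}}$ (necessarily alternating in the two generators), set $t_1 = s_{i_1}$ and $t_2 = s_{i_2}\cdots s_{i_{2j}}$. The latter is an alternating word of odd length $2j-1 \leq m-1$, hence reduced and equal to a reflection, so $\ell(t_1)+\ell(t_2) = 1 + (2j-1) = \ell(w)$. Subadditivity of depth (immediate from the min-over-factorizations definition) together with the reflection formula then yields $\dep(w) \leq \dep(t_1)+\dep(t_2) = 1 + j$, completing the even case: $\dep(w) = \ell(w)/2 + 1$.

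With the formula in hand, the generating function is bookkeeping. I would use that $I_2(m)$ has exactly one element of length $0$, exactly two elements of each length $\ell$ with $1 \leq \ell \leq m-1$, and (for finite $m$) exactly one element of top length $m$; for $m=\infty$ there are two elements of every positive length and no top element. Substituting the two branches of the depth formula, the sum splits into an odd-length stream $2\sum_i q^{2i-1}t^{i}$ and an even-length stream $2\sum_i q^{2i}t^{i+1}$, plus the contributions of $e$ and (for finite $m$) of the longest element. The three displayed cases differ only in how the top length is treated: for even $m$ the longest element has even length and contributes $q^m t^{m/2+1}$; for odd $m$ it has odd length, and its contribution merges with the final even-length term into the factor $q^{m-1}t^{(m+1)/2}(2+q)$; for $m=\infty$ the two streams are geometric series summing to $2qt(1+qt)/(1-q^2t)$. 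Factoring the common $2(1+q)t$ out of the paired streams and isolating the $i=1$ term as $2qt$ produces exactly the stated expressions.

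I expect no serious obstacle; the one point demanding care is the even-length depth computation — in particular justifying that the tail of a reduced word is genuinely a reflection of length $2j-1$ (so the exhibited factorization is reduced), and checking the boundary length $2j = m$ when $m$ is even. The generating-function step is routine once the length distribution and the two parity streams are in place, the only fussiness being the reindexing that recombines the top-length term in the odd-$m$ case.
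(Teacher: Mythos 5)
Your proof is correct and takes essentially the same route as the paper: the odd case is read off from $\dep(t_\beta) = (\ell(t_\beta)+1)/2$, and the even case uses the very same factorization (peeling the first simple reflection off a reduced word, leaving an odd-length alternating word that is a reflection) for the upper bound, with the lower bound obtained via Observation \ref{obs:dl}. If anything, your lower-bound argument ($k \ge 2$ by the parity of reflection factorizations, plus subadditivity of length) is a complete version of what the paper merely asserts (``a factorization into more than two reflections necessarily increases depth''), and your remark that the even-length formula must exclude $w = e$ is a correct reading of the statement.
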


\begin{proof}
Indeed, suppose an element $w = s_1 s_2 s_1 \cdots$. Then if $\ell_S(w)$ is odd, then $w \in T$ and $\dep(w) = (\ell_S(w)+1)/2$.  Similarly, for even $\ell_S(w)$, we must have $\dep(w) =\ell_S(w)/2 + 1$: factoring into two reflections $(s_1)(s_2s_1\cdots s_2)$ gives this depth, and factoring into more than two reflections necessarily increases depth.

The generating function expressions follow, since there are exactly two elements of each length, apart from the identity (length zero) and the long element (length $m$).
\end{proof}

For example, the distributions of length and height for $I_2(3) = A_2$ and $I_2(6)=G_2$ are given in Table \ref{tab:llh}.

We now turn our focus to the case of the symmetric group $\mf{S}_n$; that is, the Coxeter group of type $A_{n-1}$.

\section{Depth in the symmetric group}\label{sec:perm}

For a positive integer $n$, let $[n] = \{1, 2, \ldots, n\}$, and let $\mf{S}_n$ be the symmetric group  on $[n]$; that is, the set $\{w: [n]\to [n]\}$ of all bijections from $[n]$ to itself. This group is a Coxeter group of type $A_{n-1}$.  The set of simple reflections in $\mf{S}_n$ is denoted by $S_n = \{s_1, \ldots, s_{n-1}\}$, where $s_i = (i\, i+1) \in \mf{S}_n$ is the adjacent transposition interchanging $i$ and $i+1$, and fixing all other elements.

The reflections in $\mf{S}_n$ are the transpositions. We denote the set of all transpositions by 
$$T_n = \{t_{ij} : 1 \le i < j \le n\},$$
where $t_{ij} = (i\, j) \in \mf{S}_n$ is the permutation interchanging $i$ and $j$, and fixing all other elements.  To avoid confusion with the subscripts, we will occasionally write $t_{i,j}$ for $t_{ij}$.  Notably, $s_i = t_{i,i+1}$. It is easily checked that the depth of a transposition is \[\dep(t_{ij}) = j-i.\]

We will freely write elements of $w \in \mf{S}_n$ as permutations in one-line notation, permutations in cycle notation, words on the set $S_n$, or words on the set $T_n$. For example,
$$2431756 = (124)(3)(576) = s_1s_2s_3s_2s_6s_5 = t_{12}t_{24}t_{67}t_{56}.$$

It is well known that length is equal to the number of \emph{inversions} of $w$. Let $\inv(w)$ denote the number of pairs $i < j$ such that $w(i) > w(j)$. For example, we see that $\inv(2431756) = 6$. Also well known is that reflection length is equal to $n-c(w)$, where $c(w)$ denotes the number of cycles of $w$. With $w= (124)(3)(576)$ as above, we see that $7-c(w) = 4$. It is interesting that length is easily computed from the one-line notation of a permutation, whereas reflection length is easily computed from cycle notation. It would be very interesting if there was a notation for permutations in which both length and reflection length are easily seen.

\subsection{An algorithm and a characterization of depth}\label{sec:char}

We begin our discussion of depth of a permutation by reconsidering straight selection sort applied to $w=3715246$. We have:
\[\begin{array}{c c c} &3\bm{7}15246 & \\
 (67)\cdot & \downarrow  & \cdot(27) \\
& 3\bm{6}15247 & \\
(46)\cdot &  \downarrow & \cdot(26) \\
 & 341\bm{5}267 & \\
(25)\cdot & \downarrow & \cdot(45) \\
 & 3\bm{4}12567 & \\
(24)\cdot & \downarrow & \cdot(24) \\
 & \bm{3}214567 & \\
(13)\cdot & \downarrow & \cdot(13) \\
 & 1234567 &
\end{array}
 \]
In each line, we have written on the left the transposition $( w(i)\, i)$ for the highlighted letter $i$ (the largest number not yet in its proper place), while on the right we have $( w^{-1}(i)\, i)$. The difference is only whether we consider the action of left multiplication, which swaps the specified digits, or right multiplication, which swaps the digits in the specified positions.
 
We see that we could associate many different factorizations of $w$ to the straight selection sort algorithm. For the sorting index of \cite{petersen 1}, we use right multiplication at each step to obtain a reduced reflection decomposition for $w$. This gives $w = t_{13}t_{24}t_{45}t_{26}t_{27}$, and \[\dep(w) \leq (3-1) + (4-2) + (5-4) + (6-2) + (7-2) = 14.\] On the other hand, we can see equally well that $t_{13}t_{24}t_{46}t_{67}\cdot w \cdot t_{45}=e$, and so $w = t_{67}t_{46}t_{24}t_{13}t_{45}$. This factorization shows \[ \dep(w) \leq (7-6) + (6-4) + (4-2) + (3-1) + (5-4) = 8.\]

In fact, by choosing the transposition of least depth in each step of the straight selection sort, we achieve a minimal depth factorization. We make the procedure precise with the algorithm \textsf{SHALLOW-DECOMP} below.

\begin{quote}
\noindent \textbf{Algorithm} \textsf{SHALLOW-DECOMP.}\\
The input of the algorithm is a permutation $w \in \mf{S}_n$. The output is a pair of reduced decompositions $u = u(w)$, and $v = v(w)$, such that $w = uv$.
\begin{enumerate}
\item If $n=1$, then $u:=e$, $v:=e$.
\item If $w(n) = n$, then let $w':=w(1)\cdots w(n-1)$ and $u:=u(w')$, $v:=v(w')$.
\item If $w(n) < n$ then $w':=t_{w(n), n}\cdot w$ and:
\begin{enumerate}
\item if $w(n) \leq w^{-1}(n)$ then $u:=u(w')$, $v:= v(w')\cdot t_{w^{-1}(n), n}$,
\item if $w(n) > w^{-1}(n)$ then $u:=t_{w(n),n}\cdot u(w')$, $v:=v(w')$.
\end{enumerate} 
\end{enumerate}
\end{quote}

With the example of $w = 3715246$ as above, we can trace through the algorithm to find \[ u = t_{67}t_{46}t_{24}t_{13} \quad \mbox{ and } \quad v = t_{45}.\] We now prove the following, which establishes Theorem \ref{thm:char}.

\begin{thm}\label{thm:alg}
For $w \in \mf{S}_n$, the output of \textsf{SHALLOW-DECOMP} gives a reduced expression $w=uv$ of minimal reflection length and minimal depth. Moreover, this depth is equal to
$$\dep(w) = \sum_{w(i) > i} (w(i) - i),$$ and the reduced expression corresponds to a directed path in the Bruhat graph.
\end{thm}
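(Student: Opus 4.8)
The plan is to prove the three assertions in parallel, with the displacement quantity $E(w) := \sum_{w(i)>i}(w(i)-i)$ as the central object. The first thing I would record is the reformulation
\[ E(w) = \tfrac12\sum_{i=1}^n |w(i)-i|, \]
which holds because $\sum_i(w(i)-i)=0$, so the positive excedance contributions and the negative deficiency contributions have equal absolute value. This symmetric form is what makes $E$ easy to control under multiplication by a transposition. The proof then splits into two halves: a lower bound $\dep(w)\ge E(w)$ valid for \emph{every} reflection factorization, and a matching upper bound realized by \textsf{SHALLOW-DECOMP}.

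For the lower bound I would prove the one-step estimate that for any $w$ and any transposition $t=t_{ab}$ (with $a<b$, so $\dep(t)=b-a$) one has $|E(tw)-E(w)|\le \dep(t)$. Since left multiplication by $t_{ab}$ merely interchanges the values $a$ and $b$, the sum $\sum_i|w(i)-i|$ changes only in the two terms indexed by $w^{-1}(a)$ and $w^{-1}(b)$, and the reverse triangle inequality bounds the total change by $2(b-a)$; halving gives the claim. Telescoping this estimate along the partial products $p_j = t_j\cdots t_k$ of a factorization $w=t_1\cdots t_k$ (with $p_1=w$ and $p_{k+1}=e$) yields $E(w)\le\sum_i\dep(t_i)$, hence $\dep(w)\ge E(w)$ after minimizing.

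The heart of the argument, and the step I expect to be most delicate, is showing that \textsf{SHALLOW-DECOMP} achieves $\sum\dep(t_i)=E(w)$ exactly. I would argue by induction on $n$, analyzing the non-trivial case $a:=w(n)<n$ with the value $n$ sitting in position $b:=w^{-1}(n)$. The key observation is that both candidate moves coincide, $t_{a,n}\,w = w\,t_{b,n} = w'$, where $w'$ satisfies $w'(n)=n$, $w'(b)=a$, and $w'(i)=w(i)$ for all other $i$; thus $E(w)-E(w')$ depends only on $w$, not on which transposition is used. A direct computation, in which only positions $n$ and $b$ contribute, gives $E(w)-E(w') = n-b$ when $a\le b$ and $E(w)-E(w')=n-a$ when $a>b$, that is, $E(w)-E(w')=\min(n-a,\,n-b)$. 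This minimum is precisely the depth of the transposition the algorithm selects in cases (3a) and (3b), so each non-trivial step spends depth exactly equal to the drop in $E$. Induction then gives total depth $E(w)$, matching the lower bound; consequently $\dep(w)=E(w)$ and the output factorization has minimal depth.

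Finally, for minimal reflection length I would track the cycle count. In the non-trivial step the values $a=w(n)$ and $n$ lie in a common cycle of $w$ (following $n\mapsto a$ and $b\mapsto n$), so multiplying by the transposition $(a\,n)$ splits that cycle, peeling off $n$ as a new fixed point and raising the number of cycles by exactly one. Hence the algorithm terminates after $n-c(w)$ transpositions, which equals $\ell'(w)$; in particular $w=uv$ is reduced with respect to $T$. Combining the four paragraphs establishes that the output has minimal reflection length, minimal depth, and satisfies $\dep(w)=E(w)$, which proves the theorem and with it Theorem \ref{thm:char}.
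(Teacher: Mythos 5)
Your proof is correct, and its skeleton is the same as the paper's: each nontrivial step of \textsf{SHALLOW-DECOMP} spends depth exactly equal to the decrease in $E(w)=\sum_{w(i)>i}(w(i)-i)$, so induction on $n$ shows the algorithm realizes total depth $E(w)$; no factorization can do better; and reflection length is minimal because each step splits off $n$ as a new fixed point, increasing the cycle count by one, so exactly $n-c(w)=\ell'(w)$ transpositions are used. Two points differ in execution. For the upper bound, the paper runs the excedance-sum computation separately in cases (3a) and (3b); you unify them by noting that $t_{a,n}\,w = w\,t_{b,n} = w'$ in both cases, so the drop $E(w)-E(w')=\min(n-a,\,n-b)$ is independent of which transposition labels the step and equals precisely the depth of the transposition the algorithm selects --- the same computation, but with tidier bookkeeping. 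The more substantive difference is the lower bound $\dep(w)\ge E(w)$: the paper dispatches it in one informal sentence (``at the very least, all of the excedances $w(i)$ in a permutation are $w(i)-i$ places away from their initial positions''), whereas you prove it via the symmetric form $E(w)=\tfrac12\sum_i|w(i)-i|$, the one-step estimate $|E(tw)-E(w)|\le\dep(t)$ obtained from the reverse triangle inequality on the two affected terms, and telescoping along the partial products of an arbitrary reflection factorization. This is a genuine gain in rigor: it isolates exactly the fact the paper takes for granted, and the one-step Lipschitz-type estimate for $E$ under multiplication by a transposition is a reusable tool in the same spirit as the paper's Lemma \ref{lem:exc}.
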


\begin{proof}
The fact that $uv=w$ is obvious by construction. That the factorization has $\ell_T(w)$ terms follows from the fact that straight selection sort achieves minimal reflection length: a transposition can only change the number of cycles by $\pm 1$, and the algorithm uses steps that each increase the number of fixed points, and hence, cycles. 

To build a path in the Bruhat graph from the factorization into reflections, say $u=u_1u_2\cdots u_k$ and $v=v_l\cdots v_2v_1$, where the $u_i$ and the $v_j$ are transpositions. The indexing indicates the fact that the word $u$ is built from left to right in part 3(b) of the sorting, while $v$ is built from right to left in part 3(a). From steps 3(a), we find a path in the Bruhat graph $$wv_1\cdots v_l \to \cdots \to wv_1v_2 \to wv_1 \to w.$$ The fact that length increases with each edge is obvious from construction. (Remember, the sorting algorithm is putting the largest misplaced letter in its proper place.) Similarly, from steps 3(b), we find a path $$e \to u_1 \to u_1u_2 \to \cdots \to u_1\cdots u_k = wv_1\cdots v_l.$$ Thus the word $u$ builds the path from the bottom up and $v$ builds the path from the top down. They meet in the middle to form a path from $e$ to $w$. 

To see that the algorithm achieves the minimal depth, we proceed by induction on $n$. When $n=1$, this is obviously true, and the given formula for depth works.

Now suppose $n>1$ and let
$$d(w) = \sum_{w(i) > i} (w(i) -i).$$
Suppose $\dep(w') = d(w')$ for all $w' \in \mf{S}_{n-1}$. Let $w \in \mf{S}_n$. If $w(n) = n$, then we can view $w$ as lying in $\mf{S}_{n-1}$, and so $\dep(w)=d(w)$.

Now let $j = w^{-1}(n) < n$. We will first show $\dep(w)\leq d(w)$. As in step (2) of \textsf{SHALLOW-DECOMP}, we let $w' = t_{w(n),n}\cdot w$. There are two cases to consider, given in steps (3a) and (3b). In the first case, from (3a), we suppose that $w(n) \leq j$, in which case $w'(j) = w(n)$ is not an excedance of $w'$, so
\begin{align*}
 d(w) &= \sum_{w(i) > i} (w(i)-i), \\
      &= (n-j) + \sum_{i\neq j : w(i) > i} (w(i)-i), \\
      &= (n-j) + \sum_{w'(i) > i} (w'(i)-i),\\
      &= (n-j) + d(w'),\\
      &= (n-j) + \dep(w').
\end{align*}
We get that $d(w)$ is the sum of the depths in the factorization 
\[w=u(w')\cdot v(w')\cdot t_{j n},\] and so $\dep(w) \leq d(w)$.

For the next case, from (3b) of the algorithm, we suppose that $w(n) = w'(j) > j$. This time,  
\begin{align*}
 d(w) &= \sum_{w(i) > i} (w(i)-i), \\
      &= (n-j) + \sum_{i\neq j : w(i) > i} (w(i)-i), \\
      &= (n-j) + \sum_{i\neq j : w'(i) > i} (w'(i)-i),\\
      &= (n-j) + d(w') - (w(n) -j),\\
      &= (n-w(n)) + d(w'),\\
      &= (n-w(n)) + \dep(w').
\end{align*}
Again we conclude that $d(w)$ is the sum of the depths in the factorization \[ w = t_{w(n),n}\cdot u(w')\cdot v(w'),\] yielding $\dep(w) \leq d(w)$.

Thus, we see that $d(w)$ is an upper bound for depth in all cases. On the other hand, we know $d(w) \leq \dep(w)$, because, at the very least, all of the excedances $w(i)$ in a permutation are $w(i) -i$ places away from their initial positions. Combining the inequalities we have $\dep(w) = d(w)$, as desired.
\end{proof}

We include here for reference Table \ref{tab:hnk}, which contains the distribution of depths for $n\leq 8$. This array can be found as entry A062869 of \cite{oeis}.

\begin{table}[h]
\begin{center}
{\tiny \begin{tabular}{ c |ccccccc c c c c c cc cc cccc}
& $k=0$ & 1 & 2 & 3 &  4 & 5 & 6 & 7 & 8 & 9 & 10 & 11 & 12 & 13 & 14 & 15 & 16\\
\hline
$n=1$ &1  \\

2 &1 & 1 \\

3 &1 & 2 & 3 && && &&  &&\\

4 &1 & 3 & 7 & 9 & 4 &&  &&\\

5 &1 & 4 & 12 & 24 & 35 & 24 & 20 \\

6 &1 & 5 & 18 & 46 & 93 & 137 & 148 & 136 & 100 & 36 \\

7 &1 & 6 & 25 & 76 & 187 & 366 & 591 & 744 & 884 & 832 & 716 & 360 & 252\\

8 & 1 & 7 & 33 & 115 & 327 & 765 & 1523 & 2553 & 3696 & 4852 & 5708 & 5892 & 5452 & 4212 & 2844 & 1764 & 576\\
\end{tabular}}
\end{center}
\caption{The number of permutations $w \in \mf{S}_n$ with depth $\dep(w) =k$.}\label{tab:hnk}
\end{table}

Given the formula for depth in Theorem \ref{thm:alg}, we can prove the following.

\begin{prop}\label{prp:upper}
For all $w \in \mf{S}_n$, we have $\dep(w) \leq \lfloor n^2/4\rfloor$, and this bound is sharp.
\end{prop}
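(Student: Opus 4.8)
The plan is to reduce everything to the closed formula $\dep(w) = \sum_{w(i)>i}(w(i)-i)$ from Theorem~\ref{thm:alg} and then extract a sharp counting bound. First I would rewrite depth as a sum over thresholds. Since each excedance contributes $w(i)-i = |\{k : i \le k < w(i)\}|$, interchanging the order of summation yields
$$\dep(w) = \sum_{k=1}^{n-1} a_k, \qquad a_k := \big|\{ i : i \le k \text{ and } w(i) > k \}\big|,$$
so that $a_k$ records how many of the first $k$ positions carry a value larger than $k$.

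The heart of the argument is a pigeonhole bound on each $a_k$: the positions it counts number at most $k$ (since $i \le k$) and at most $n-k$ (since their values lie in $\{k+1,\dots,n\}$). Hence $a_k \le \min(k, n-k)$, and therefore $\dep(w) \le \sum_{k=1}^{n-1}\min(k,n-k)$. Evaluating this last sum is an elementary computation: treating $n=2m$ and $n=2m+1$ separately and using the symmetry of $\min(k,n-k)$ about $k=n/2$, both cases collapse to $\lfloor n^2/4\rfloor$, which establishes the upper bound.

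For sharpness I would exhibit the reversal $w_0$ defined by $w_0(i)=n+1-i$. For this permutation $w_0(i) > k$ holds precisely when $i \le n-k$, so combining with the constraint $i \le k$ gives $a_k = \min(k,n-k)$ for every $k$. Thus equality holds at all thresholds simultaneously, and summing shows $\dep(w_0) = \lfloor n^2/4\rfloor$, matching the bound.

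I expect no serious obstacle here: the only points requiring care are verifying the reindexing identity for $\dep$ and confirming that the reversal saturates the per-level bound $a_k \le \min(k,n-k)$ at every $k$ at once. As a sanity check and an alternative route, one can instead use the footrule identity $\dep(w) = \tfrac12\sum_i |w(i)-i|$ (immediate from $\sum_i (w(i)-i)=0$), which reduces the claim to the known maximum $\lfloor n^2/2\rfloor$ of Spearman's footrule over $\mf{S}_n$ and produces the same constant.
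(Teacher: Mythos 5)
Your proof is correct, but it takes a genuinely different route from the paper. The paper first computes $\dep(w_0) = \lfloor n^2/4\rfloor$ for the reversal $w_0 = n(n-1)\cdots 21$ directly, and then proves maximality by a sorting argument: Lemma \ref{lem:exc} shows that right-multiplying $w$ by a transposition $t_{ij}$ with $i<j$ and $w(i)<w(j)$ never decreases depth, and any $w$ can be carried to $w_0$ by a sequence of such transpositions, whence $\dep(w) \leq \dep(w_0)$. Your argument instead avoids Lemma \ref{lem:exc} entirely: you slice the formula of Theorem \ref{thm:alg} into levels, writing $\dep(w) = \sum_{k=1}^{n-1} a_k$ with $a_k = |\{i : i \le k,\ w(i)>k\}|$ (note the excedance condition $w(i)>i$ is automatic when $i \le k < w(i)$, so the reindexing is valid), and then bound each level by the pigeonhole estimate $a_k \le \min(k,n-k)$, checking that $w_0$ saturates every level simultaneously. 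This is shorter and entirely self-contained, and it has the further virtue that the equality analysis is transparent: $\dep(w) = \lfloor n^2/4\rfloor$ forces $a_k = \min(k,n-k)$ for all $k$, which for $n=2k$ immediately recovers the characterization $\{w(1),\ldots,w(k)\} = \{k+1,\ldots,n\}$ used in the paper's Proposition \ref{prp:upper2}. What the paper's heavier route buys is precisely that reusable machinery: Lemma \ref{lem:exc} and the transposition-monotonicity argument are invoked again, essentially verbatim, in the proof of Proposition \ref{prp:upper2}, whereas your approach would have to redo that count (though, as noted, it can). One caveat: your closing "footrule" alternative, $\dep(w) = \tfrac12\sum_i |w(i)-i|$, is a correct identity, but deducing the bound from the known maximum $\lfloor n^2/2\rfloor$ of Spearman's footrule imports an external result (due to Diaconis and Graham) that is not proved here; since you offer it only as a sanity check, this does not affect the validity of your main argument.
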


Further, we can say precisely how many permutations achieve this upper bound.

\begin{prop}\label{prp:upper2}
The number of permutations in $\mf{S}_n$ achieving maximal depth is \[|\{ w \in \mf{S}_n : \dep(w) = \lfloor n^2/4\rfloor\}| = \begin{cases}(k!)^2 & \mbox{if $n=2k$,}\\
n(k!)^2 & \mbox{if $n=2k+1$}.
\end{cases}\]
\end{prop}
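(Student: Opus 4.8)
The plan is to first pin down exactly which permutations attain maximal depth, and then enumerate them, splitting by the parity of $n$. The starting point is a convenient reformulation of the depth formula from Theorem~\ref{thm:alg}. Since $\sum_i (w(i)-i) = 0$, the total size of the excedances equals the total size of the deficiencies, so
\[ 2\dep(w) = \sum_{i=1}^n |w(i) - i|. \]
I would then expand each displacement as $|w(i)-i| = \#\{t : \min(i,w(i)) \le t < \max(i,w(i))\}$ and interchange the order of summation to obtain $2\dep(w) = \sum_{t=1}^{n-1} c_t(w)$, where $c_t(w)$ counts the indices $i$ for which exactly one of $i, w(i)$ is $\le t$. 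Because $w$ is a bijection, $c_t(w) = 2\,\#\{i \le t : w(i) > t\}$, and this ``cut'' quantity is at most $2\min(t, n-t)$. Summing these bounds recovers Proposition~\ref{prp:upper}; the point relevant here is that, being a sum of individually bounded terms, $\dep(w)$ is maximal \emph{if and only if} every cut is individually maximal, i.e. $\#\{i \le t : w(i) > t\} = \min(t,n-t)$ for all $t$.

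Next I would translate ``every cut is maximal'' into a condition on the permutation matrix. Write $k = \lfloor n/2\rfloor$. For $t \le k$ the equality reads $\#\{i \le t : w(i) > t\} = t$, forcing every position $i \le t$ to have $w(i) > t$; taking $t = k$ shows that positions $\le k$ avoid values $\le k$, i.e. the bottom-left $k\times k$ corner block is empty. Symmetrically, for $t \ge \lceil n/2\rceil$ the equality reads $\#\{i \le t : w(i) > t\} = n-t$, which (since there are exactly $n-t$ values exceeding $t$) forces every value $> t$ to occupy a position $\le t$; taking the smallest such $t$ shows that positions $> n-k$ avoid values $> n-k$, i.e. the top-right $k \times k$ corner block is empty. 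The converse is a short direct check: if both corner blocks are empty, then for $t \le k$ all positions $\le t$ carry values $> k \ge t$, and for $t \ge \lceil n/2\rceil$ every value $> t$ already lies in a position $\le t$, so each cut is maximal. Thus the maximizers are precisely the permutations whose matrix avoids both $k\times k$ corner blocks.

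The final step is the count. For $n = 2k$ the bottom-left block alone forces $w$ to map $\{1,\dots,k\}$ bijectively onto $\{k+1,\dots,2k\}$ and the reverse, giving $(k!)^2$ permutations. For $n = 2k+1$ the central position and central value create an extra degree of freedom, and this is where the real work lies. I would set $c = w(k+1)$ and argue by cases according to whether $c \le k$, $c = k+1$, or $c \ge k+2$, using $P_1 = \{1,\dots,k\}$ and $P_3 = \{k+2,\dots,2k+1\}$. The corner-block constraints force the values $\{k+2,\dots,2k+1\}$ into $P_1$ and the values $\{1,\dots,k\}$ into $P_3$ in a determined pattern, leaving $k!$ free assignments on each half; tracking where the middle value $k+1$ is forced to go in each case yields contributions $k\,(k!)^2$, $(k!)^2$, and $k\,(k!)^2$, which sum to $(2k+1)(k!)^2 = n\,(k!)^2$.

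The main obstacle I anticipate is the odd case: both establishing that two corner-block conditions (not one) give the exact characterization, and carrying out the case analysis on the central element so that the forced assignments are counted without omission or double-counting. The even case, by contrast, is essentially immediate once the corner-block characterization is in place.
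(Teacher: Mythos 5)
Your proposal is correct, and it takes a genuinely different route from the paper. The paper's proof leans on Lemma~\ref{lem:exc} (how depth changes under right multiplication by a transposition $t_{ij}$ with $i<j$, $w(i)<w(j)$) together with Proposition~\ref{prp:upper}: it shows that the candidate sets ($\mf{S}_{k,k}$ for $n=2k$, and the $n$ sets $\mf{S}_{i;k,k}$ indexed by the middle value for $n=2k+1$) are closed under depth-preserving transpositions and connected by such moves to a (nearly) decreasing element of known depth $\lfloor n^2/4\rfloor$, while any permutation outside these sets admits a transposition that strictly increases depth, hence is not maximal. You instead decompose the statistic itself: writing $2\dep(w)=\sum_i|w(i)-i|=\sum_{t=1}^{n-1}c_t(w)$ with $c_t(w)=2\,\#\{i\le t: w(i)>t\}\le 2\min(t,n-t)$, you get the upper bound $\lfloor n^2/4\rfloor$ and, simultaneously and for free, the exact equality condition (every cut maximal), which translates into your two empty corner blocks --- a description that coincides with the paper's $\mf{S}_{k,k}$ and $\bigcup_i\mf{S}_{i;k,k}$; your three-way case analysis on $c=w(k+1)$ matches the paper's split $i\le k+1$ versus $i>k+1$ and yields the same count $(2k+1)(k!)^2$. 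What your approach buys is self-containment and sharpness: it needs neither the case-heavy Lemma~\ref{lem:exc} nor the connectivity argument (moving an arbitrary maximizer to $w_0$), because termwise bounds make the extremal structure immediate. What the paper's approach buys is economy within its own narrative: it reuses machinery already developed for Proposition~\ref{prp:upper} rather than re-deriving the bound, at the cost of a less direct characterization of the equality case.
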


Statements equivalent to both these propositions can be found in the paper of Diaconis and Graham, although without proof (see Table 1 and Remark 2 of \cite{DG}).  They are also mentioned in the remarks (and links therein) for entry A062870 of \cite{oeis}. In particular, Alekseyev has a webpage \cite{alekseyev} with a proof of Proposition \ref{prp:upper2}.

To prove the propositions, it will be helpful to have the following lemma.

\begin{lem}\label{lem:exc}
Let $w \in \mf{S}_n$, and suppose $i< j$ and $w(i)<w(j)$. Then,
$$\dep(w\cdot t_{ij}) = \begin{cases} \dep(w) & \mbox{if $j < w(i)$ or $w(j) \leq i$,} \\
\dep(w)+\min\{w(j),j\}-\max\{w(i),i\} & \mbox{otherwise.}
\end{cases}$$
In particular, $\dep(w\cdot t_{ij}) \geq \dep(w)$ for all such $w$, $i$, and $j$.
\end{lem}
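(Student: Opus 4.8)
The plan is to use the closed formula from Theorem~\ref{thm:alg}, namely $\dep(w) = \sum_{w(i)>i}(w(i)-i)$, and simply track how the excedance contributions change when we multiply $w$ on the right by $t_{ij}$. Right multiplication by $t_{ij}$ swaps the values in positions $i$ and $j$: the permutation $w\cdot t_{ij}$ agrees with $w$ everywhere except that it sends $i\mapsto w(j)$ and $j\mapsto w(i)$. Since $i<j$ and $w(i)<w(j)$, the only two positions whose excedance status can change are $i$ and $j$, so I would isolate the difference
\[
\dep(w\cdot t_{ij}) - \dep(w) = \bigl[\max\{w(j)-i,0\} + \max\{w(i)-j,0\}\bigr] - \bigl[\max\{w(i)-i,0\} + \max\{w(j)-j,0\}\bigr].
\]
Everything reduces to evaluating these four $\max\{\cdot,0\}$ terms under the hypotheses.

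Next I would run the case analysis driven by where $i$ and $j$ sit relative to $w(i)$ and $w(j)$. The two ``no change'' subcases in the statement correspond to degenerate configurations. If $j \le w(i)$, then $i < j \le w(i) < w(j)$, so all four positions are excedances and the signed contributions telescope: positions $i,j$ contribute $w(j)-i$ and $w(i)-j$ after the swap versus $w(i)-i$ and $w(j)-j$ before, and these sums are equal. If instead $w(j) < i$, then $w(i) < w(j) < i < j$, so none of the four terms is a genuine excedance (each $\max$ is $0$), giving no change. In the remaining ``otherwise'' case I expect the new value of $\min\{w(j),j\} - \max\{w(i),i\}$ to emerge by checking the handful of orderings of the four numbers $i<j$ and $w(i)<w(j)$ that are consistent with $j>w(i)$ and $w(j)\ge i$; in each such ordering the surviving positive terms combine to give exactly $\min\{w(j),j\}-\max\{w(i),i\}$.

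The main obstacle will be bookkeeping rather than any conceptual difficulty: enumerating the orderings in the ``otherwise'' case cleanly so that the formula $\min\{w(j),j\}-\max\{w(i),i\}$ falls out uniformly, rather than splitting into many ad hoc sub-possibilities. I would organize this by noting that under $j > w(i)$ and $w(j) \ge i$ the quantity $\max\{w(i),i\}$ is the larger of the two ``small'' coordinates and $\min\{w(j),j\}$ is the smaller of the two ``large'' coordinates, and that the hypotheses guarantee $\min\{w(j),j\} \ge \max\{w(i),i\}$, so the increment is nonnegative. Finally, the last sentence of the lemma---that $\dep(w\cdot t_{ij}) \ge \dep(w)$---is then immediate, since in the ``no change'' cases the difference is $0$ and in the ``otherwise'' case it equals the nonnegative quantity just identified.
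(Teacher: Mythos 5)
Your proposal is correct and follows essentially the same route as the paper: both apply the excedance-sum formula $\dep(w) = \sum_{w(i)>i}(w(i)-i)$ from Theorem~\ref{thm:alg} and analyze the relative orderings of $i$, $j$, $w(i)$, $w(j)$, observing that only positions $i$ and $j$ change status. If anything, your single difference identity with the four $\max\{\cdot,0\}$ terms, together with the observation that in the ``otherwise'' case $\max\{w(i)-j,0\}=0$ and $\max\{w(j)-i,0\}=w(j)-i$, packages the case analysis more uniformly than the paper, which verifies three orderings explicitly and declares the rest ``similar.''
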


\begin{proof}
Since $\dep(w) = \frac{1}{2}\sum_{i=1}^n |w(i)-i|$, half of total displacement (see Remark \ref{rem:TD}), it will suffice to study the following quantity:
\begin{align*}
 \kappa &=\dep(w\cdot t_{ij})-\dep(w) \\
  &= \frac{1}{2}\left(|w(j)-i|+|w(i)-j|-|w(i)-i|-|w(j)-j|\right).
\end{align*}

There are six cases, and for each of these $\kappa$ is easily computed:
\[
\begin{array}{| c |c |}
\hline
\mbox{Case} & \mbox{Value of $\kappa$} \\
\hline \hline
w(i) < w(j) \leq i < j & 0 \\
w(i)\leq i < w(j) \leq j & w(j)-i \\
w(i) \leq i < j < w(j) & j-i \\
i < w(i) < w(j) \leq j & w(j)-w(i) \\
i < w(i) \leq j < w(j) & j-w(i) \\
i < j < w(i) < w(j) & 0 \\
\hline
\end{array}
\] 

For example, if $w(i)<w(j)\leq i<j$, then, 
\[
 2\kappa = (i-w(j)) + (j-w(i)) - (i-w(i)) -(j -w(j)) = 0,
\] 
while if $w(i)\leq i < w(j) \leq j$, then, 
\[
 2\kappa = (w(j)-i) + (j-w(i)) - (i-w(i)) -(j -w(j)) = 2(w(j)-i).
\] 

The values of $\kappa$ computed here prove the lemma. 
\end{proof}

With the lemma in hand, we now prove Proposition \ref{prp:upper}.

\begin{proof}[Proof of Proposition \ref{prp:upper}]
Observe that the longest permutation, $w_0 = n (n-1) \cdots 21$, achieves the claimed upper bound of $\lfloor n^2/4 \rfloor$. Indeed, if $n$ is even, then \begin{align*}
 \dep(w_0) = (n-1) + ((n-1)-2) + \cdots &= (n-1) + (n-3) +\cdots + 5+ 3+1 \\
 &= \left(\frac{n}{2}\right)^2.
 \end{align*}
If $n$ is odd, then
\begin{align*}
 \dep(w_0) = (n-1) + ((n-1)-2) + \cdots &= (n-1) + (n-3) +\cdots + 4+ 2 \\
 &= \left(\frac{n+1}{2}\right)\left(\frac{n+1}{2}-1\right) = \frac{n^2-1}{4}.
\end{align*}

Now, we claim that $\dep(w_0) \geq \dep(w)$ for any $w \in \mf{S}_n$.  

First suppose $w_0 = w t_{ij}$. Then since $w_0$ is totally decreasing, it must be that $i < j$ and $w(i) < w(j)$. Thus $\dep(w_0) \geq \dep(w)$ by Lemma \ref{lem:exc}.

In general, we can proceed from any permutation $w$ to $w_0$ by transpositions of this type. To see this, define  $A(w) = \{ t_{ij} : i < j, w(i) < w(j) \}$. Then $|A(w_0)| = 0$, and it is the only permutation with this property. Generally, if $t \in A(w)$, then
$$A(w t) \subseteq (A(w)\setminus\{t\}).$$
In particular, $|A(w t)| < |A(w)|$. So by some choice of transpositions $t_{ij}$ with $i< j$ and $w(i)<w(j)$, we can move from any $w$ to $w_0$. Lemma \ref{lem:exc} shows that this cannot decrease depth. So there exists a sequence of transpositions $t_1, \ldots, t_r$ such that
$$|A(w)|>|A(w t_1)|>\cdots > |A(w t_1\cdots t_r)| = |A(w_0)| = 0,$$
and
$$\dep(w_0) = \dep(w t_1\cdots t_r) \geq \cdots \geq \dep(wt_1) \geq \dep(w).$$

This completes the proof.
\end{proof}
 
Now we will prove Proposition \ref{prp:upper2}, characterizing the permutations with maximal depth.

\begin{proof}[Proof of Proposition \ref{prp:upper2}]
To begin, suppose $n=2k$ is even. We will show that the permutations for which $\dep(w) = k^2$ are precisely those permutations for which 
$$\{ w(1), w(2),\ldots,w(k)\} = \{k+1,k+2,\ldots,n\},$$
or, equivalently,
$$\{w(k+1),w(k+2),\ldots, w(n)\} = \{1,2,\ldots,k\}.$$
Denote this set by $\mf{S}_{k,k}$. Obviously there are $(k!)^2$ permutations in this set, since the first $k$ elements can be permuted independently of the final $k$ elements. 

Suppose $w \in \mf{S}_{k,k}$. If $i, j \in \{1,2,\ldots,k\}$ or if $i,j \in \{k+1,k+2,\ldots,n\}$, then $wt_{ij} \in \mf{S}_{k,k}$ as well. Moreover, from Lemma \ref{lem:exc}, we see that \[ \dep(wt_{ij}) = \dep(w).\]
To see that these depths are equal to $k^2 =\lfloor n^2/4\rfloor$, we note that, just as in the proof of Proposition \ref{prp:upper}, a succession of such transpositions can be used to obtain the decreasing permutation $w_0 \in \mf{S}_{k,k}$, whose depth was already established as $\dep(w_0) = n^2/4$.

Finally, we observe that if $w \notin \mf{S}_{k,k}$, then there are indices $i$ and $j$, with $i \leq k < j$, such that $w(i) \leq k < w(j)$. Using Lemma \ref{lem:exc} and Proposition \ref{prp:upper} we have $\dep(w)< \dep(wt_{ij}) \leq \dep(w_0)$.

The case when $n$ is odd proceeds analogously, except that we will define $n = 2k+1$ different sets $\mf{S}_{i;k,k}$, one for each choice $i \in [n]$ of the middle value, $w(k+1)$, of the permutation. 

If $i \leq k+1$, then $\mf{S}_{i;k,k}$ is the set of permutations for which
$$\{w(1),\ldots, w(k)\} = \{k+2,\ldots,n\},$$
or, equivalently,
$$\{w(k+2),\ldots,w(n)\} = \{1,\ldots,k+1\}-\{i\}.$$
If $i > k+1$, then $\mf{S}_{i;k,k}$ is the set of permutations for which
$$\{w(1),\ldots, w(k)\} = \{k+1,\ldots,n\}-\{i\},$$
or, equivalently,
$$\{w(k+2),\ldots,w(n)\} = \{1,\ldots,k\}.$$
Although $w_0 \notin \mf{S}_{i;k,k}$ if $i\neq k+1$, it is not hard to see that each $\mf{S}_{i;k,k}$ is closed under depth-preserving transpositions that do not involve $i$, and each $\mf{S}_{i;k,k}$ contains a ``nearly decreasing" element $w$ whose non-middle entries are mapped as
$$w:(1,2,\ldots,\widehat{k+1},\ldots,n) \mapsto (n,n-1,\ldots,\widehat{i},\ldots,1).$$
Moreover, this $w$ has depth 
$$\frac{n^2-1}{4}$$
if $i\le k+1$ and depth
$$\frac{n^2-1}{4} - 1(i - (k+1)) + (i - (k+1)) = \frac{n^2-1}{4}$$
if $i>k+1$.  There are $(k!)^2 $ permutations in each $\mf{S}_{i;k,k}$, and so a total of $n(k!)^2$ permutations $w \in \mf{S}_{2k+1}$ with $\dep(w) = \lfloor n^2/4 \rfloor$.

If $w \notin \mf{S}_{i;k,k}$ for any $i$, it follows that there are $i < k+1 < j$ such that $w(i) < k+1 < w(j)$, and so again using Lemma \ref{lem:exc} and Proposition \ref{prp:upper} we have $\dep(w)< \dep(wt_{ij}) \leq \dep(w_0)$.

This completes the proof.
\end{proof}

\subsection{Depth is equidistributed with descent drop}\label{sec:equidist}

We will now prove Theorem \ref{thm:equidist}. Recall that $\des(w) = |\{ i : w(i) > w(i+1)\}|$ and $\exc(w) = |\{ i : w(i) > i\}|$. In the introduction we defined the \emph{descent drop} of $w$ to be \[ \drop(w) = \sum_{w(i) > w(i+1)} (w(i) - w(i+1)),\] and Theorem \ref{thm:char} shows that \[ \dep(w) = \sum_{w(i) > i} (w(i) - i).\] Theorem \ref{thm:equidist} claims the following:
\begin{equation}\label{eq:equi2}
\sum_{w \in \mathfrak{S}_n} q^{\drop(w)} t^{\des(w)} = \sum_{w \in \mathfrak{S}_n} q^{\tiny \dep(w)} t^{\exc(w)}.
\end{equation}

The proof of equation~\eqref{eq:equi2} follows from a bijection of Steingr\'imsson that carries descents to excendances \cite[Appendix]{stein}, related to the ``transformation fondamentale'' of Foata and Sch\"utzenberger \cite{foata-schutzenberger}.  We follow the description of the bijection given in Theorem 51 of \cite{stein}.

Given $w \in \mf{S}_n$, we form $v\in \mf{S}_n$ with the following process. For each $j \in [n]$:
\begin{itemize}
\item if $w(j) > w(k)$ for some $k>j$, then \[ v(w(j+1)) = w(j),\]

\item otherwise, let $i<j$ be maximal such that $w(i) < w(j)$ (with $w(0)=0$), and declare \[ v(w(i+1)) = w(j). \]
\end{itemize}

We now let $\phi: \mf{S}_n \to \mf{S}_n$ be given by $\phi(w) = v$.

For example, let $w = 7213645$. In working from left to right in $w$:
 we put $w(1) = 7$ in position $w(2) =2$ of $v$ because it $7$ greater than something to its right in $w$; 
we put $w(2)=2$ in position $w(3)=1$ of $v$ because $2$ is greater than something to its right in $w$; 
 we put $w(3) = 1$ in position $w(1) =7$ of $v$ since it is \emph{not} greater than something to its right in $w$ and $w(0)=0$ is the rightmost number less than $w(3)$, and so on.
 Continuing, we ultimately achieve $v=2736541$. \[ \begin{array}{r| c c c c c c c} w & 7 & 2 & 1 & 3 & 6 & 4 & 5 \\
\hline v & v(1) & v(2) & v(3) & v(4) & v(5) & v(6) & v(7) \\ \hline
& & 7 & \\
 &2& 7 &\\
 &2& 7 & & & & & 1\\
& 2& 7 & 3& & & & 1 \\
 &2& 7 & 3& 6 & & & 1 \\
& 2& 7 & 3& 6 & & 4& 1 \\
& 2& 7 & 3& 6 & 5 & 4& 1 \end{array}
\]

That the map $\phi$ is well-defined and injective (hence bijective) follows from close inspection of its definition, and we will omit proof of this fact. See \cite[Section 4.1]{stein thesis}.

More important from our perspective is the following lemma that follows by construction of $\phi$.

\begin{lem}{\cite[Section 4.1]{stein}}\label{lem:dex}
Let $w \in \mf{S}_n$ and $v =\phi(w)$. Then $w(i) > w(i+1)$ is a descent of $w$ if and only if $w(i) = v(w(i+1))$ is an excedance of $v$. In particular, $\des(w) = \exc(v)$.
\end{lem}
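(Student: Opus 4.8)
The plan is to follow each value $w(j)$, for $j \in [n]$, to the unique position of $v = \phi(w)$ that it comes to occupy, and to decide case by case whether this assignment produces an excedance of $v$. Since we are granted that $\phi$ is a bijection, the $n$ assignments coming from $j = 1, \ldots, n$ fill the positions of $v$ exactly once, so it suffices to determine which of these assignments are excedances and to match them with the descents of $w$.

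The forward direction is immediate. If $i$ is a descent, so $w(i) > w(i+1)$, then taking $k = i+1 > i$ shows the first bullet of the definition applies with $j = i$, whence $v(w(i+1)) = w(i)$. Because $w(i) > w(i+1)$, the position $w(i+1)$ satisfies $v(w(i+1)) = w(i) > w(i+1)$, i.e.\ it is an excedance of $v$. Thus every descent $i$ of $w$ yields an excedance of $v$ at position $w(i+1)$ carrying the value $w(i)$, exactly as claimed.

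For the converse I would argue that no other assignment can create an excedance. In the first bullet the value $w(j)$ lands at position $w(j+1)$, and since $w(j) \neq w(j+1)$ this is an excedance precisely when $w(j) > w(j+1)$, i.e.\ when $j$ is a descent; so within the first bullet, excedances occur only at descents. The crux is the second bullet, where $w(j)$ is a right-to-left minimum and lands at position $w(i+1)$, with $i < j$ maximal such that $w(i) < w(j)$ (using $w(0)=0$). Here I would show $w(i+1) \geq w(j)$ in every case: if $i+1 = j$ the position equals $w(j)$ and we get a fixed point, while if $i+1 < j$ then $i+1$ is an index strictly between $i$ and $j$, so maximality of $i$ forces $w(i+1) > w(j)$ and the assignment is an anti-excedance. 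Consequently $v(w(i+1)) = w(j) \leq w(i+1)$, so the second bullet never produces an excedance.

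Combining these, the excedances of $v$ are exactly the positions $w(i+1)$ arising from descents $i$ of $w$, which establishes both directions of the equivalence and gives $\des(w) = \exc(v)$. The main obstacle is the bookkeeping in the second bullet --- in particular checking the boundary situations $i+1 = j$, the case $i = 0$ via the convention $w(0)=0$, and the observation that $j = n$ necessarily falls under the second bullet (there is no $k > n$) and so is never a descent nor an excedance --- but each of these reduces to the maximality of $i$ together with the right-to-left-minimum hypothesis.
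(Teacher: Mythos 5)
Your proof is correct. The paper itself gives no argument for this lemma---it remarks only that the statement ``follows by construction of $\phi$'' and cites \cite{stein}---and your case analysis is precisely the verification that remark gestures at: a descent at $i$ triggers the first bullet (with $k=i+1$) and yields the excedance $v(w(i+1))=w(i)>w(i+1)$; a non-descent first-bullet assignment gives $v(w(j+1))=w(j)<w(j+1)$; and in the second bullet the maximality of $i$ forces $w(i+1)>w(j)$ when $i+1<j$, while $i+1=j$ gives a fixed point, so right-to-left minima never land as excedances. Your handling of the boundary cases ($i=0$ via $w(0)=0$, and $j=n$ falling under the second bullet) and your appeal to bijectivity of $\phi$ to ensure every position of $v$ is accounted for exactly once are exactly the details needed to pass from the pointwise equivalence to $\des(w)=\exc(v)$.
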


In the example above, $w = \bm{7}\bm{2}13\bm{6}45$ has descent pairs $7>2$, $2>1$, and $6>4$, so that $\drop(w) = 5 + 1 + 2 =8$. On the other hand, $v = \bm{27}3\bm{6}541$ has excedances $2$ (in position 1), $7$ (in position 2), and $6$ (in position 4), so $\dep(v) = 1 + 5 + 2 = 8$.

From Lemma \ref{lem:dex} we can now prove Theorem \ref{thm:equidist}.

\begin{proof}[Proof of Theorem \ref{thm:equidist}]
It suffices to show that $\phi$ carries descents to excedances, and descent drops to depth. Lemma \ref{lem:dex} states that $\des(w) = \exc(\phi(w))$, and moreover, with $v = \phi(w)$,
\begin{align*}
\drop(w) &= \sum_{w(i)>w(i+1)} (w(i) - w(i+1))\\
 &= \sum_{w(i) > w(i+1)} (v(w(i+1)) - w(i+1)) \\
 &= \sum_{v(k) > k} (v(k) - k) \\
 &= \dep(v) = \dep(\phi(w)),
\end{align*}
as desired.
\end{proof}

\subsection{The distribution of depth}\label{sec:depthk}

There are obviously $n-1$ elements of depth 1 (the simple reflections) and it is also not difficult to see that for $n\geq 3$: \[|\{ w \in \mf{S}_n : \dep(w) = 2 \}| = \binom{n-1}{2} + 2(n-2) = \frac{(n+3)(n-2)}{2}. \] In terms of reduced decompositions, these are all elements of the form:
\begin{itemize}
\item $s_is_j$, with $1\leq i < j\leq n-1$, 
\item $s_{i+1}s_i$, with $1\leq i\leq n-2$,
\item $s_i s_{i+1} s_i$, with $1 \leq i \leq n-2$.
\end{itemize}

In general, however, we have no simple formula for the number of permutations in $\mf{S}_n$ that have depth (or descent drop) $k$. 

Recently, Guay-Paquet and the first author \cite{GP} proved the following continued fraction formula for the ordinary generating function for depth:
\begin{align}
 F(t,z) &= \sum_{n\geq 0}\sum_{w \in \mf{S}_n} t^{\dep(w)} z^n \nonumber \\
  &= \frac{1}{1 - \displaystyle\frac{z}
                    {1 - \displaystyle\frac{t z}
                    {1 - \displaystyle\frac{2t z}
                    {1 - \displaystyle\frac{2t^2 z}
                    {1 - \displaystyle\frac{3t^2 z}
                    {1 - \displaystyle\frac{3t^3 z}
                    {1 - \displaystyle\frac{4t^3 z}
                    {1 - \displaystyle\frac{4t^4 z}
                    {1 - \cdots}}}}}}}}} \label{eq:contfrac}
\end{align}
This expression allows us, for example, to verify the formulas in Table \ref{tab:depk}, which were earlier computed by Timothy Walsh through extensive computation \cite{walsh}. The formulas are polynomials of degree $k$ that hold for $n\geq k$.

\begin{table}
\[
\begin{array}{c | c}
k & |\{ w \in \mf{S}_n : \dep(w) =k \}| \\
\hline
0 & 1 \\
1 & n-1 \\
2 & \binom{n-2}{2} + 3(n-2) \\
3 & \binom{n-3}{3} + 6\binom{n-3}{2} + 9(n-3) \\
4 & \binom{n-4}{4} + 9\binom{n-4}{3} + 27\binom{n-4}{2} + 31(n-4) + 4 \\
5 & \binom{n-5}{5} + 12\binom{n-5}{4} + 54\binom{n-5}{3} + 116\binom{n-5}{2} + 113(n-5) + 24\\
6 & \binom{n-6}{6} + 15\binom{n-6}{5} + 90\binom{n-6}{4} + 282\binom{n-6}{3} + 489\binom{n-6}{2} + 443(n-6) + 148 \\
7 & \binom{n-7}{7} + 18\binom{n-7}{6} + 135\binom{n-7}{5} + 556\binom{n-7}{4} + 1375\binom{n-7}{3} + 2074\binom{n-7}{2} + 1809(n-7) + 744
\end{array}
\]
\caption{The number of permutations with depth $k$.}\label{tab:depk}
\end{table}

The key ingredient in the proof of Equation \ref{eq:contfrac} involves a map from permutations to Motzkin paths that takes depth to the area under the path. This map makes Propositions \ref{prp:upper} and \ref{prp:upper2} easy corollaries, since it is easy to characterize which permutations map to the unique Motzkin path of maximal area. See \cite[Remarks 3.3, 4.3]{GP}.

\section{Coincidences of depth, length, and reflection length}\label{sec:coincidence}

In this section and the next, we will establish Theorem \ref{thm:extremes}, in which we characterize, in terms of pattern avoidance, those permutations for which $\dep(w) = \ell_S(w)$ and those for which $\dep(w)=\ell_T(w)$. The results we prove here were stated by Diaconis and Graham \cite[Remarks 3 and 4]{DG}, though they omitted proofs. Further, while they enumerated the $\dep(w) = \ell_T(w)$ case, they did not provide a characterization.

First, recall the notion of pattern avoidance. Let $w \in \mf{S}_n$ and $p \in \mf{S}_k$, where $n \ge k$. We say that $w$ \emph{contains} a $p$-pattern if there exist indices $\{i_1 < \cdots < i_k\}$ such that $w(i_1)\cdots w(i_k)$ is in the same relative order as $p(1)\cdots p(k)$.  Otherwise, we say that $w$ \emph{avoids} $p$, or is \emph{$p$-avoiding}. For example, the permutation $3241576$ contains the pattern $1234$ (in positions $\{1,3,5,6\}$, for example), and avoids the pattern $4321$.

\subsection{When depth equals length}\label{sec:hequalsl}

We now present a characterization of those permutations for which $\dep(w) = \ell_S(w)$.

\begin{thm}\label{thm:hequalsl}
For any $w$, $\dep(w) = \ell_S(w)$ if and only if $w$ avoids $321$.
\end{thm}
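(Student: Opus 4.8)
The plan is to prove both directions of the equivalence $\dep(w) = \ell(w) \iff w \text{ avoids } 321$ by exploiting the clean formulas we already have: $\dep(w) = \sum_{w(i)>i}(w(i)-i)$ from Theorem~\ref{thm:alg}, and $\ell(w) = \inv(w)$, the number of inversions. Since Observation~\ref{obs:1} already gives $\dep(w) \leq \ell(w)$ for all $w$, the task reduces to characterizing exactly when equality holds, and the whole argument can be organized around a bookkeeping identity comparing the two sums term by term.

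\textbf{The contrapositive direction.} First I would show that if $w$ \emph{contains} a $321$-pattern, then $\dep(w) < \ell(w)$. The cleanest route is to exhibit a gap between the inversion count and the depth sum. Every excedance $i$ (with $w(i)>i$) contributes $w(i)-i$ to both $\dep(w)$ and, I claim, to $\ell(w)$: indeed $w(i)-i$ counts certain inversions involving position $i$. The point is that $\ell(w) = \inv(w)$ counts \emph{all} inversion pairs, whereas $\dep(w)$ only accumulates the ``forward displacement'' of excedances; a $321$-pattern forces the existence of inversions that are not captured by any excedance's displacement, creating a strict surplus in $\inv(w)$. Concretely, if positions $a<b<c$ satisfy $w(a)>w(b)>w(c)$, the pair $(a,c)$ is an inversion, and one can argue this inversion is ``double counted'' or otherwise unaccounted for in the depth sum, so $\inv(w) > \dep(w)$.

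\textbf{The forward direction.} For the converse, I would show directly that if $w$ avoids $321$, then $\inv(w) = \dep(w)$, i.e. $\sum_{w(i)>w(j),\,i<j} 1 = \sum_{w(i)>i}(w(i)-i)$. The key structural fact about $321$-avoiding permutations is that they are a union of two increasing subsequences, equivalently that both the excedance values and the non-excedance values appear in increasing order (or a similarly restrictive shape). Using this, I would establish a bijection between the set of inversion pairs and a multiset realizing $\sum_{w(i)>i}(w(i)-i)$: for each excedance position $i$, the quantity $w(i)-i$ should equal precisely the number of inversions ``rooted'' at $i$ in a suitable sense, and in a $321$-avoider every inversion is so rooted exactly once.

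\textbf{Main obstacle.} I expect the delicate step to be making the term-by-term correspondence between inversions and the depth contributions rigorous, specifically proving that in a $321$-avoiding permutation the number of inversion pairs equals $\sum_{w(i)>i}(w(i)-i)$ with no overcounting. The clean way to handle this is to exploit the two-increasing-sequences structure: in a $321$-avoider, for an excedance $i$, the displaced values between $i$ and $w(i)$ can be matched exactly to inversions, whereas in a permutation with a $321$-pattern the two increasing subsequences overlap and produce extra inversions. An alternative, possibly slicker approach would be induction via \textsf{SHALLOW-DECOMP}, tracking how each step changes $\inv(w) - \dep(w)$ and showing this difference stays zero precisely under $321$-avoidance, but I anticipate the direct inversion-counting argument to be the most transparent.
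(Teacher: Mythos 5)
Your overall strategy---comparing $\dep(w)=\sum_{w(i)>i}(w(i)-i)$ with $\ell(w)=\inv(w)$ by attributing to each excedance the inversions whose left entry it is---is the same engine that drives the paper's proof, so the plan is not wrong. But the proposal has a genuine gap: the statement you yourself flag as ``the delicate step'' is exactly the content of the paper's key lemma, and you never supply it. The missing fact (Lemma~\ref{obs:fiber} in the paper) is: if $w(i)>i$, then
$$|\{\,j>i : w(j)<w(i)\,\}| \;\ge\; w(i)-i,$$
with equality if and only if $(i,w(i))$ is a \emph{left-right maximum} of $w$. The proof is pigeonhole: all $n-w(i)$ values exceeding $w(i)$ lie to the right of position $i$ precisely when $(i,w(i))$ is a left-right maximum, in which case exactly $(n-i)-(n-w(i))=w(i)-i$ of the entries to its right are smaller; otherwise strictly more are. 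The phrase ``union of two increasing subsequences'' by itself never tells you when the per-excedance count is exact; left-right maxima do, and this is the notion your write-up lacks.

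Given that lemma, chain the inequalities
$$\dep(w)\;=\;\sum_{w(i)>i}(w(i)-i)\;\le\;\sum_{w(i)>i}|\{\,j>i : w(j)<w(i)\,\}|\;\le\;\inv(w)\;=\;\ell(w).$$
Equality in the first step forces every excedance to be a left-right maximum; equality in the second forces every inversion to have an excedance as its left entry. Your contrapositive direction should then run through the \emph{middle} entry of a $321$ pattern, not the pair $(a,c)$: if $w(a)>w(b)>w(c)$ with $a<b<c$, then $(b,w(b))$ is not a left-right maximum, so if $w(b)>b$ the first inequality is strict, while if $w(b)\le b$ the inversion $(b,c)$ has a non-excedance as its left entry and the second inequality is strict; either way $\dep(w)<\ell(w)$. (Your suggestion that the inversion $(a,c)$ is ``double counted or otherwise unaccounted for'' is not an argument and is false in general: in $w=321$ itself, the inversion $(1,3)$ is rooted at the left-right-maximal excedance $1$ and is accounted for exactly once; the deficit comes from the middle entry.) Conversely, if $w$ avoids $321$, both inequalities are equalities, again by pigeonhole: a non-left-right-maximal excedance would have a larger value to its left and a smaller value to its right, and an inversion rooted at a non-excedance would have a larger value to the left of its root---each yields a $321$ pattern. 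The paper packages this same count through the map to Dyck paths built from left-right maxima (Theorem~\ref{thm:fiber}), showing each fiber has a unique minimal-length element, the $321$-avoider, which alone satisfies $\dep=\ell$; once you add the lemma above, your argument and the paper's essentially coincide.
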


\begin{cor}
The number of $w \in \mf{S}_n$ for which $\dep(w) = \ell_S(w)$ is given by the Catalan number $\Cat_n = \frac{1}{n+1}\binom{2n}{n}$.
\end{cor}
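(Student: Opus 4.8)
The plan is to read the corollary as an immediate consequence of Theorem~\ref{thm:hequalsl} followed by a classical enumeration. By that theorem the set $\{w \in \mf{S}_n : \dep(w) = \ell(w)\}$ is exactly the set of $321$-avoiding permutations of $[n]$, so it suffices to prove that the number of $321$-avoiding permutations in $\mf{S}_n$ equals $\Cat_n$. This is a classical fact, and the only thing left to decide is which self-contained route to record; the substantive content has already been absorbed into the (assumed) Theorem~\ref{thm:hequalsl}.

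My preferred route is the Robinson--Schensted correspondence. It gives a bijection $w \leftrightarrow (P,Q)$ between $\mf{S}_n$ and pairs of standard Young tableaux of a common shape $\lambda \vdash n$, and by Schensted's theorem the number of rows of $\lambda$ equals the length of the longest decreasing subsequence of $w$. Since avoiding $321$ is exactly the condition that the longest decreasing subsequence have length at most $2$, the $321$-avoiders correspond precisely to pairs of tableaux whose shape $\lambda = (n-k,k)$ has at most two rows, $0 \le k \le \lfloor n/2\rfloor$. The hook length formula gives $f^{(n-k,k)} = \binom nk - \binom n{k-1}$, so
\[ |\{w \in \mf{S}_n : w \text{ avoids } 321\}| \;=\; \sum_{k=0}^{\lfloor n/2\rfloor}\Bigl(\binom nk - \binom n{k-1}\Bigr)^2, \]
and I would finish by invoking the standard Catalan identity that evaluates this sum as $\Cat_n$ (provable, for instance, by the reflection principle applied to a pair of lattice paths).

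An alternative, more in the spirit of the paper's remark about Dyck paths, is to build a direct bijection with Dyck paths of semilength $n$: one uses the structural fact that in a $321$-avoider the non-left-to-right-maxima already form an increasing subsequence, so $w$ splits canonically into two increasing subsequences, and the excedance data (following Billey--Jockusch--Stanley) can be encoded as a lattice path. I expect the genuine work in either route to lie entirely in the enumeration rather than in the corollary itself: in the Robinson--Schensted route it is the ballot-square Catalan identity, and in the Dyck route it is checking that the value-encoding is an honest bijection, since a raw up/down record of the maxima only yields ballot paths and one must track values in order to land on $\Cat_n$ exactly. Because this enumeration is classical, the real content of the corollary is just the reduction supplied by Theorem~\ref{thm:hequalsl}.
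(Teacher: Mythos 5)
Your proposal is correct: the reduction to counting $321$-avoiding permutations via Theorem~\ref{thm:hequalsl} is exactly how the corollary is meant to be read, and the Robinson--Schensted route (Schensted's theorem on decreasing subsequences, two-row shapes, the hook length formula, and the ballot-square identity $\sum_k \bigl(\binom{n}{k}-\binom{n}{k-1}\bigr)^2 = \Cat_n$) is a valid classical proof that these are Catalan-many. However, the enumeration step is where you diverge from the paper. The paper does not need RSK or any binomial identity: the Catalan count falls out of the very machinery used to prove Theorem~\ref{thm:hequalsl}. There, the map $P:\mf{S}_n \to \Dy_n$ sending a permutation to the Dyck path determined by its left-to-right maxima is shown (Theorem~\ref{thm:fiber}) to have, in each fiber, a unique minimal-length element $w'$ satisfying $\dep(w')=\ell(w')$, with every other fiber element satisfying $\dep(w)<\ell(w)$; a follow-up proposition identifies these minimal representatives as precisely the $321$-avoiding permutations and notes that distinct $321$-avoiders have distinct left-to-right maxima, hence distinct images under $P$. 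Since $P$ is surjective, this restricts to a bijection between $\{w : \dep(w)=\ell(w)\}$ and $\Dy_n$, giving $\Cat_n$ with no further work. Your second sketched route --- encoding the split of a $321$-avoider into left-to-right maxima and an increasing complement as a lattice path, in the style of Billey--Jockusch--Stanley --- is essentially the paper's bijection, and your caveat that one must track values (not just an up/down record) to get a genuine bijection is the content of the paper's injectivity check. In short: your preferred RSK route buys independence from the paper's Dyck-path apparatus at the cost of importing RSK and a nontrivial identity, while the paper's route gets the enumeration for free from its proof of the characterization.
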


Recall that a permutation $w$ is called \emph{fully commutative} if every reduced expression $w=s_1\cdots s_{\ell_S(w)}$ for $w$ (with simple reflections) can be obtained from any other by a sequence of swaps of adjacent letters: $s_i s_j = s_j s_i$ with $|i-j| > 1$. In \cite[Theorem 2.1]{BJS} it was shown that $w$ is fully commutative if and only if $w$ avoids 321. (See \cite{stem} for generalizations.) Thus, we obtain another characterization of when length equals depth.

\begin{cor}\label{cor:hlfc}
A permutation $w \in \mf{S}_n$ has $\dep(w) = \ell_S(w)$ if and only if $w$ is fully commutative.
\end{cor}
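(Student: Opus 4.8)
The plan is to deduce this corollary directly from Theorem~\ref{thm:hequalsl} together with the cited result of Billey, Jockusch, and Stanley. The statement is a chain of two biconditionals sharing a common middle term, namely the property of avoiding the pattern $321$, so no new combinatorial content is required beyond assembling the two equivalences.

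Concretely, I would first invoke Theorem~\ref{thm:hequalsl}, which already establishes that $\dep(w) = \ell(w)$ holds precisely when $w$ avoids $321$. Next I would recall the characterization from \cite[Theorem 2.1]{BJS}, quoted in the paragraph immediately preceding the corollary, that $w$ is fully commutative if and only if $w$ avoids $321$. Chaining these two equivalences through the shared condition ``$w$ avoids $321$'' then yields at once that $\dep(w) = \ell(w)$ if and only if $w$ is fully commutative.

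Since both halves of the argument are already in hand, there is no genuine obstacle here: the corollary is simply a translation of the pattern-avoidance characterization of Theorem~\ref{thm:hequalsl} into the language of reduced words via \cite{BJS}. The one point worth verifying is that the two cited equivalences refer to the same pattern condition on the same group $\mf{S}_n$ --- avoidance of $321$ --- which they do, so transitivity of ``if and only if'' completes the proof.
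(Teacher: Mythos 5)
Your proposal is correct and matches the paper's reasoning exactly: the corollary is obtained by chaining Theorem~\ref{thm:hequalsl} ($\dep(w)=\ell(w)$ iff $w$ avoids $321$) with the Billey--Jockusch--Stanley characterization of fully commutative permutations as the $321$-avoiding ones. Nothing further is needed.
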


We will prove Theorem \ref{thm:hequalsl} by exhibiting a bijection between $\{w : \dep(w) = \ell_S(w)\}$ and Dyck paths. More specifically, we will demonstrate that the fibers of a certain map from $\mf{S}_n$ to Dyck paths of length $2n$ have unique minimal length representatives, and that these representatives are the 321-avoiding permutations.

Let $\Dy_n$ denote the set of Dyck paths of length $2n$; that is, those lattice paths from $(0,0)$ to $(n,n)$ that take steps North and East, and never pass below the line $y=x$.

Let $\lr(w)$ denote the set of \emph{left-right maxima} of $w$, written as pairs $(i,w(i))$; that is,
$$\lr(w) = \{ (i,w(i)) : w(j)<w(i) \mbox{ for all } j <i \}.$$
From $w$ we can then form a Dyck path by putting the outer corners of the path at coordinates $(i-1,w(i))$ for each left-right maximum. This is easiest to understand with an example. Let $w = 23176845$. We will draw $w$ as a collection of non-attacking rooks on a chessboard, where the value $w(i)$ is placed in the box appearing in the $i$th column from the left and the $w(i)$th row from the bottom. We circle the left-right maxima, then draw the Dyck path with these positions as outer corners. Let $P: \mf{S}_n \to \Dy_n$ denote this function. We see $P(23176845)$ in Figure \ref{fig:Dyck}.

\begin{figure}
\begin{tikzpicture}[scale=.75,node distance=.5cm,>=stealth',bend angle=30,auto]
\draw[style=help lines,step=1cm] (0,0) grid (8,8);
\draw (.5,1.5) node[circle,draw] {2};
\draw (.5,.5) node {$\bullet$};
\draw (1.5,2.5) node[circle,draw] {3};
\draw (1.5,1.5) node {$\bullet$};
\draw (2.5,.5) node {1};
\draw (3.5,6.5) node[circle,draw] {7};
\draw (3.5,3.5) node {$\bullet$};
\draw (3.5,4.5) node {$\bullet$};
\draw (3.5,5.5) node {$\bullet$};
\draw (4.5,5.5) node {6};
\draw (5.5,7.5) node[circle,draw] {8};
\draw (5.5,5.5) node {$\bullet$};
\draw (5.5,6.5) node {$\bullet$};
\draw (6.5,3.5) node {4};
\draw (7.5,4.5) node {5};
\draw[very thick] (0,0) -- (0,2) -- (1,2) -- (1,3) -- (3,3) -- (3,7) -- (5,7) -- (5,8) -- (8,8);
\end{tikzpicture}
\caption{An example of the map from $\mf{S}_n$ to $\Dy_n$.}\label{fig:Dyck}
\end{figure}

It will be useful to have the following lemma.

\begin{lem}\label{obs:fiber}
Fix $w \in \mf{S}_n$. If $w(i) - i > 0$, then \[|\{ j : i < j \mbox{ and } w(i) > w(j) \}| \geq w(i) - i,\] with equality if and only if $(i,w(i))$ is a left-right maximum. In particular,
\begin{eqnarray*} 
|\{ (i,j) : i < j, w(i)>w(j),  \mbox{ and } (i,w(i)) \in \lr(w)\}| &=& \sum_{(i,w(i)) \in \lr(w)} (w(i)-i) \\
&\leq& \dep(w)\\
&\leq& \ell_S(w).
\end{eqnarray*} 
\end{lem}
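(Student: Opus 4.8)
The plan is to prove the two inequalities directly and then recognize the sum as the stated cardinality. Fix $w \in \mf{S}_n$ and suppose $(i,w(i))$ has $w(i) > i$.

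\medskip

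\noindent\textbf{The lower bound.} First I would establish that $|\{ j : i < j \mbox{ and } w(i) > w(j)\}| \geq w(i)-i$. The key observation is a counting argument on the values in $[1, w(i)-1]$. Among the positions $1, 2, \ldots, n$, exactly $w(i)-1$ of them carry values strictly less than $w(i)$ (namely every value except $w(i), w(i)+1, \ldots, n$, which number $n - w(i) + 1$; the remaining $w(i)-1$ values are smaller). Now position $i$ itself carries $w(i)$, so the values smaller than $w(i)$ are distributed among the other $n-1$ positions. Of the positions to the \emph{left} of $i$, there are only $i-1$ of them, so at most $i-1$ of the values less than $w(i)$ can sit in positions $j < i$. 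Hence at least $(w(i)-1)-(i-1) = w(i)-i$ of the values smaller than $w(i)$ must sit in positions $j > i$. Each such position $j$ gives $w(i) > w(j)$ with $i < j$, establishing the inequality.

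\medskip

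\noindent\textbf{The equality condition.} For the ``if and only if'' clause, I would track when the bound $i-1$ in the argument above is attained with every one of those left positions actually holding a value below $w(i)$. Equality $|\{ j : i<j,\ w(i)>w(j)\}| = w(i)-i$ holds exactly when all $i-1$ positions to the left of $i$ carry values less than $w(i)$, i.e.\ when $w(j) < w(i)$ for every $j < i$. But this is precisely the defining condition for $(i,w(i)) \in \lr(w)$. So equality holds if and only if $(i,w(i))$ is a left-right maximum; otherwise the left-hand count strictly exceeds $w(i)-i$.

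\medskip

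\noindent\textbf{Assembling the displayed chain.} For the final display, I would sum over left-right maxima. Every left-right maximum $(i,w(i))$ with $w(i) > i$ contributes exactly $w(i)-i$ pairs $(i,j)$ with $i<j$ and $w(i)>w(j)$, by the equality case just proved; and a left-right maximum with $w(i) \le i$ contributes no such inversions (indeed $w(i) \ge i$ always holds for a left-right maximum since it dominates the $i-1$ values before it together with itself, forcing $w(i)\ge i$, so the only excluded case $w(i)=i$ contributes $0 = w(i)-i$ as well). This gives the first equality
\[
|\{ (i,j) : i < j,\ w(i)>w(j),\ (i,w(i)) \in \lr(w)\}| = \sum_{(i,w(i)) \in \lr(w)} (w(i)-i).
\]
The middle inequality follows because the sum over left-right maxima is a \emph{subsum} of $\sum_{w(i)>i}(w(i)-i) = \dep(w)$ (by Theorem \ref{thm:alg}), all terms being nonnegative; the entries $i$ with $w(i)>i$ that are \emph{not} left-right maxima contribute further nonnegative terms. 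The final inequality $\dep(w) \le \ell(w)$ is just Observation \ref{obs:1}.

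\medskip

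\noindent\textbf{Main obstacle.} The combinatorial heart is the lower bound and its sharp equality condition; everything after is bookkeeping. The one subtlety I would be careful about is the interplay between the two descriptions of the same quantity: the set on the left counts \emph{inversions with left endpoint a left-right maximum}, whereas the middle sum ranges over all left-right maxima including those with $w(i) = i$. Verifying that these two viewpoints agree term-by-term — in particular that no left-right maximum has $w(i) < i$ — is the step most likely to need explicit care, and I would handle it via the observation that a left-right maximum at position $i$ exceeds all $i-1$ earlier values, hence exceeds at least $i-1$ distinct values of $[n]$, forcing $w(i) \ge i$.
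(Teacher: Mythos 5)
Your proof is correct and takes essentially the same approach as the paper: the paper counts the $n-w(i)$ values larger than $w(i)$, which must all lie to the right of position $i$ exactly when $(i,w(i))$ is a left-right maximum, while you count the $w(i)-1$ values smaller than $w(i)$ and bound how many can sit to the left --- the same pigeonhole argument in complementary form, yielding the same equality characterization. Your explicit assembly of the displayed chain, including the check that left-right maxima satisfy $w(i) \ge i$ so the fixed-point case contributes $0$, is a careful spelling-out of what the paper leaves implicit.
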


\begin{proof}
Every left-right maximum $(i,w(i))$ is the greater value in $w(i)-i$ inversions, since there are only $n-w(i)$ numbers greater than $w(i)$ and each of these must be placed among the $n-i$ positions $\{ i+1, i+2,\ldots,n\}$. So there are $n-i-(n-w(i)) = w(i)-i$ numbers $j$ such that $i < j$ and $w(i)>w(j)$.

If $(i,w(i))$ is not a left-right maximum, then there are strictly fewer than $n-w(i)$ numbers greater than $w(i)$, to be placed among the $n-i$ positions to its right. Thus, the number of positions $j$ such that $i < j$ and $w(i) > w(j)$ is:
\[ n-i - |\{\mbox{numbers larger than $w(i)$ to the right of $w(i)$}\}| > n-i-(n-w(i)) = w(i) -i.\]
\end{proof}

We have highlighted with bullet points the spaces below left-right maxima and on or above the main diagonal. Each bullet point can be identified with an inversion pair in which the left-right maximum above the bullet point is the greater value in the pair, and the value connected to the bullet point by a dashed line is the smaller value in the pair.

Let $p \in \Dy_n$ be a Dyck path, and let $P^{-1}(p)$ denote its preimage under $P$, that is, \[ P^{-1}(p) = \{ w \in S_n : P(w) = p\}.\]

\begin{thm}\label{thm:fiber}
For any Dyck path $p$, the following statements are true of $P^{-1}(p)$.
\begin{enumerate}\renewcommand{\labelenumi}{(\alph{enumi})}
\item There is a unique element $w' \in P^{-1}(p)$ of minimal length such that $\dep(w') = \ell_S(w')$.
\item For any other element $w'\neq w \in P^{-1}(p)$, $\dep(w) < \ell_S(w)$.
\end{enumerate}
\end{thm}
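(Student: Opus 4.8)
The plan is to exploit the fact that every element of a fiber $P^{-1}(p)$ has the same set of left-right maxima (as pairs $(i,w(i))$), since $P$ reads off exactly this data; so the elements of the fiber differ only in how the remaining \emph{free} values are distributed among the remaining \emph{free} positions. I would split the inversions of an arbitrary $w \in P^{-1}(p)$ into two kinds according to whether the larger entry of the inversion sits at a left-right maximum. By the count inside the proof of Lemma \ref{obs:fiber}, a left-right maximum $(i,w(i))$ is the larger entry of exactly $w(i)-i$ inversions, so the number of inversions of the first kind is $\sum_{(i,w(i))\in\lr(w)}(w(i)-i)$, which depends only on the left-right maxima and is therefore \emph{constant} on the fiber. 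An inversion of the second kind has its larger entry at a non-left-right-maximum position, and one checks its smaller entry lies there too; these are precisely the inversions internal to the free positions. Hence minimizing $\ell(w)=\inv(w)$ over the fiber is the same as minimizing the number of internal inversions.

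I would then construct the candidate $w'$ by listing the free values in increasing order across the free positions $j_1<\cdots<j_m$. The feasibility point is that $w'$ really lands back in $P^{-1}(p)$: the ceilings $M_{j_1}\le\cdots\le M_{j_m}$ (the value of the left-right maximum immediately preceding each free position) are non-decreasing, and I will show the $k$-th smallest free value lies below $M_{j_k}$ by noting that the original $w$ already places $k$ distinct free values below $M_{j_k}$ in positions $j_1,\dots,j_k$. Since $w'$ has no internal inversions, its inversions are exactly the $\sum_{(i,w'(i))\in\lr(w')}(w'(i)-i)$ of the first kind, so the sandwich $\sum_{(i,w'(i))\in\lr(w')}(w'(i)-i)\le\dep(w')\le\ell(w')$ of Lemma \ref{obs:fiber} collapses to $\dep(w')=\ell(w')$. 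A fiber element with no internal inversions must read its free values increasingly, which pins $w'$ down uniquely, so $w'$ is the unique minimal-length element; this is part (a).

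For part (b) I need the \emph{strict} inequality $\dep(w)<\ell(w)$ for every other $w$, i.e. whenever an internal inversion is present. The cleanest device is the exact gap formula
\[ \ell(w)-\dep(w) = \sum_{w(i)\le i} R_i + \sum_{w(i)>i} L_i, \]
where $R_i=|\{j>i:w(j)<w(i)\}|$ and $L_i=|\{j<i:w(j)>w(i)\}|$, which follows from $\inv(w)=\sum_i R_i$, the identity $w(i)-i=R_i-L_i$, and the depth formula of Theorem \ref{thm:alg}. Given an internal inversion $(i,j)$, the position $i$ is not a left-right maximum, so an earlier entry exceeds $w(i)$, giving $L_i\ge 1$, while the inversion itself gives $R_i\ge 1$. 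If $w(i)>i$ the term $L_i$ appears in the second sum, and if $w(i)\le i$ the term $R_i$ appears in the first sum; in either case the right-hand side is positive, so $\dep(w)<\ell(w)$.

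The two places needing real care are the feasibility of $w'$ (that sorting the free values keeps every left-right maximum intact and introduces no new one) and the strict inequality in (b), since the Lemma \ref{obs:fiber} sandwich alone only yields $\dep\le\ell$. I expect the gap formula to be the linchpin: once it is available, part (b) is a one-line case split on $w(i)>i$ versus $w(i)\le i$, and it simultaneously re-confirms $\dep(w')=\ell(w')$. The main risk is bookkeeping in the ceiling/counting argument for feasibility, so I would sanity-check against the running example $w=23176845$, where $1$ is forced into position $3$ and $\{4,5,6\}$ sort into positions $5,7,8$, producing the unique minimal-length (and $321$-avoiding) fiber element $23174856$.
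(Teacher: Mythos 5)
Your proposal is correct, and part (a) is essentially the paper's argument: you build the same canonical element $w'$ by sorting the non-left-right-maximum values increasingly across the free positions, and you collapse the sandwich of Lemma \ref{obs:fiber} to get $\dep(w')=\ell(w')$ and uniqueness of the minimal-length element; your explicit feasibility check that $w'$ lands back in $P^{-1}(p)$ (via the ceilings $M_{j_1}\le\cdots\le M_{j_m}$) is a point the paper leaves implicit, and your argument for it is sound. Where you genuinely diverge is part (b). The paper argues by cases on excedance data: if $w$ has the same excedance set as $w'$, then $\dep(w)=\dep(w')=\ell(w')<\ell(w)$; and if $w$ has an excedance at a non-left-right-maximum position, then the strict inequality of Lemma \ref{obs:fiber}, summed over excedances, gives $\dep(w)<\inv(w)=\ell(w)$. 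You instead prove the exact identity $\ell(w)-\dep(w)=\sum_{w(i)\le i}R_i+\sum_{w(i)>i}L_i$, with $R_i=|\{j>i:w(j)<w(i)\}|$ and $L_i=|\{j<i:w(j)>w(i)\}|$, which follows correctly from $\inv(w)=\sum_i R_i$ and $w(i)-i=R_i-L_i$ together with Theorem \ref{thm:alg}, and then observe that any inversion internal to the free positions forces a positive term on the right -- a uniform, case-free argument. Your route buys two things: it sidesteps the exhaustiveness of the paper's dichotomy (which tacitly uses that $w'$ has no excedances outside its left-right maxima, a fact requiring its own small argument), and the gap formula is a quantitative refinement of the inequality $\dep(w)\le\ell(w)$ that could be of independent interest. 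The paper's route, in exchange, stays entirely inside Lemma \ref{obs:fiber} and introduces no new notation. Both constitute complete proofs.
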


\begin{proof}
For part (a), we simply observe that if the elements that are not left-right maxima are arranged in increasing order, then every inversion of the permutation must have a left-right maximum as the larger number in the inversion pair. In other words, if we let $w'$ denote this permutation, we have
$$\{ (i,j) : i < j, w'(i) > w'(j) \} = \{ (i,j) : i<j, w'(i)>w'(j), \mbox{ and } (i,w'(i)) \in \lr(w')\}.$$
That is, the inequalities in Lemma \ref{obs:fiber} are equalities. For example the path in Figure \ref{fig:Dyck} has $w' = 23174856$, where $\{1,4,5,6\}$ have been inserted into $23\underline{\ \ }7\underline{\ \ }8\underline{\ \ }\,\underline{\ \ }$ in increasing order. See Figure \ref{fig:w'}.

\begin{figure}
\begin{center}
\begin{tikzpicture}[scale=.75, node distance=.5cm,>=stealth',bend angle=20,auto]
\draw[style=help lines,step=1cm] (0,0) grid (8,8);
\draw (.5,1.5) node[circle,draw] {2};
\draw (.5,.5) node {$\bullet$};
\draw (1.5,2.5) node[circle,draw] {3};
\draw (1.5,1.5) node {$\bullet$};
\draw (2.5,.5) node {1};
\draw (3.5,6.5) node[circle,draw] {7};
\draw (3.5,3.5) node {$\bullet$};
\draw (3.5,4.5) node {$\bullet$};
\draw (3.5,5.5) node {$\bullet$};
\draw (4.5,3.5) node {4};
\draw (5.5,7.5) node[circle,draw] {8};
\draw (5.5,5.5) node {$\bullet$};
\draw (5.5,6.5) node {$\bullet$};
\draw (6.5,4.5) node {5};
\draw (7.5,5.5) node {6};
\draw[very thick] (0,0) -- (0,2) -- (1,2) -- (1,3) -- (3,3) -- (3,7) -- (5,7) -- (5,8) -- (8,8);
\draw[dashed] (.5,.5) -- (2.3,.5);
\draw[dashed] (1.5,1.5) -- (2.3,.5);
\draw[dashed] (3.5,3.5) -- (4.3,3.5);
\draw[dashed] (3.5,4.5) -- (6.3,4.5);
\draw[dashed] (3.5,5.5) edge[bend right] (7.3,5.5);
\draw[dashed] (5.5,5.5) -- (6.3,4.5);
\draw[dashed] (5.5,6.5) -- (7.3,5.5);
\end{tikzpicture}
\end{center}
\caption{The minimal length element $w'$.}\label{fig:w'}
\end{figure}

For part (b), there are two cases to consider for $w \in P^{-1}(p)$. 

If $w$ and $w'$ have the same excedance set, then $\dep(w) = \dep(w')$. If $w\neq w'$, the elements that are not left-right maxima cannot be increasing in $w$, and thus there must be an inversion among those numbers, giving $\ell_S(w) > \ell_S(w') =\dep(w') = \dep(w)$.

If $w$ has excedances apart from the left-right maxima, then we rely on Lemma \ref{obs:fiber} again, which says that if $w(i)-i = k >0$ and $(i,w(i))$ is \emph{not} a left-right maximum, then $w(i)$ is the greater value in more than $k$ inversions, and so
\begin{eqnarray*}
\dep(w) &=& \sum_{w(i) > i} (w(i)-i)\\
&<& \sum_{w(i) > i} |\{ j : i < j, w(i) > w(j)\}|\\
&\leq& \inv(w) = \ell_S(w),
\end{eqnarray*}
as desired.
\end{proof}

\begin{prop}
The minimal length fiber representatives, $w'$, are precisely the $321$-avoiding permutations.
\end{prop}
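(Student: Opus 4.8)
The plan is to prove the two inclusions separately, relying in both directions on a single structural observation: in any occurrence of the pattern $321$, neither of the two smaller entries can be a left-right maximum.

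First I would show that every minimal length representative $w'$ avoids $321$. Recall from the proof of Theorem~\ref{thm:fiber}(a) that $w'$ is built by placing the entries that are \emph{not} left-right maxima in increasing order among the available positions. Suppose toward a contradiction that $w'$ contains a $321$ pattern in positions $i<j<k$, so $w'(i)>w'(j)>w'(k)$. Since $w'(i)>w'(j)$ with $i<j$, the entry at position $j$ is not a left-right maximum; likewise, since $w'(i)>w'(k)$ with $i<k$, the entry at position $k$ is not a left-right maximum. Thus both $w'(j)$ and $w'(k)$ lie among the non-left-right-maxima, which are increasing in $w'$, forcing $w'(j)<w'(k)$ and contradicting $w'(j)>w'(k)$. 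Hence $w'$ is $321$-avoiding.

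Conversely, I would show that if $w$ avoids $321$, then $w$ is already the minimal length representative of its own fiber, i.e. $w=w'$ for $p=P(w)$. Since $w$ and $w'$ share the same left-right maxima (they have the same image under $P$), it suffices to prove that the non-left-right-maxima of $w$ appear in increasing order, since that property uniquely determines $w'$ within the fiber. If this failed, there would be positions $j<k$, neither a left-right maximum, with $w(j)>w(k)$. Because $w(j)$ is not a left-right maximum, there is some $i<j$ with $w(i)>w(j)$; then $i<j<k$ and $w(i)>w(j)>w(k)$ is a $321$ pattern, contradicting our hypothesis. Therefore the non-left-right-maxima of $w$ are increasing, and $w=w'$.

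Combining the two directions, the set of minimal length fiber representatives coincides exactly with the set of $321$-avoiding permutations. I do not anticipate a serious obstacle: both implications reduce to the same elementary fact about where the two smaller entries of a $321$ pattern can sit relative to $\lr(w)$. The only point requiring a little care is the last step of the backward direction, namely confirming that ``non-left-right-maxima in increasing order'' together with a fixed Dyck path pins down a unique permutation, which is immediate from the explicit description of $w'$ in the proof of Theorem~\ref{thm:fiber}(a).
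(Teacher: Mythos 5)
Your proof is correct and takes essentially the same approach as the paper: both directions rest on the observation that in a $321$ pattern the two smaller entries cannot be left-right maxima, combined with the fact that $w'$ arranges the non-left-right-maxima in increasing order. The only difference is in the converse, where the paper instead argues that $P$ is injective on $321$-avoiding permutations (asserting that distinct $321$-avoiders have distinct left-right maxima, left as ``straightforward to check''), whereas your direct argument---a $321$-avoider has its non-left-right-maxima in increasing order, hence coincides with $w'$---in fact supplies exactly the verification the paper omits.
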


\begin{proof}
Let $w'$ be the minimal length representative of $P^{-1}(p)$. By definition, any left-right maximum can only play the role of `3' in the pattern 321. Thus, the other two elements in the pattern must be elements that are not left-right maxima. But by construction, the subword of these elements is strictly increasing, so it avoids the pattern 21.

On the other hand, it is straightforward to check that if $u$ and $v$ are two distinct permutations that avoid 321, then $\lr(u) \neq \lr(v)$, and so they correspond to distinct Dyck paths: $P(u)\neq P(v)$.
\end{proof}

We have now proved Theorem \ref{thm:hequalsl}.  The class of permutations defined in Theorem~\ref{thm:hequalsl} is entry P0002 of \cite{tenner dppa}, enumerated by sequence A000108 of \cite{oeis}.

\subsection{When reflection length, depth, and length coincide}\label{sec:lequalsl}

If we are given a permutation $w$ such that length equals reflection length, then Observation \ref{obs:1} clearly implies that the depth of $w$ has the same value. On the other hand, if we know only that depth equals reflection length, then Observation \ref{obs:2} claims that length equals reflection length and again all three are equal: \[\ell_T(w) = \dep(w) =\ell_S(w).\] It is this family of permutations that we study in this section.

The question of when $\ell_T(w) = \ell_S(w)$ has been studied before. Edelman \cite{edelman} studied the question of the joint distribution of the number of cycles, $n-\ell_T(w)$, together with the inversion number, $\ell_S(w)$. In particular, \cite[Theorem 3.1]{edelman} characterizes those $w$ for which $\ell_T(w) = \ell_S(w)$. These are what Edelman calls ``unimodal" permutations; that is, permutations in which the cycles are disjoint intervals and each cycle is unimodal. Here, we give a different characterization, in the language of pattern avoidance. 

\begin{thm}\label{thm:equivalent lengths}
The length and the reflection length of a permutation $w$ agree if and only if $w$ is $321$- and $3412$-avoiding.
\end{thm}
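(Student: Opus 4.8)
The plan is to push everything through the depth statistic. By Observation \ref{obs:2}, $\ell(w)=\ell'(w)$ exactly when $\dep(w)=\ell'(w)$, so it suffices to characterize the latter. For this I would introduce the \emph{crossing numbers} $c_j(w)=|\{i\le j:\ w(i)>j\}|$ for $1\le j\le n-1$. Since $\sum_i(w(i)-i)=0$, Theorem \ref{thm:char} gives $\dep(w)=\tfrac12\sum_i|w(i)-i|$, and a telescoping count rewrites this as $\dep(w)=\sum_{j=1}^{n-1}c_j(w)$. A cut with $c_j=0$ is precisely a point where $w$ splits as a direct sum, so if $w=w_1\oplus\cdots\oplus w_m$ is the finest such decomposition then $\dep(w)=\sum_{c_j\ge1}c_j\ge n-m$ (each of the $n-m$ crossed cuts contributes at least $1$), whereas $\ell'(w)=n-c(w)\le n-m$ because cycles cannot cross blocks. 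Therefore $\dep(w)=\ell'(w)$ if and only if \emph{every crossed cut has $c_j=1$} and \emph{every indecomposable block is a single cycle}. The rest of the proof matches these two structural conditions to the two forbidden patterns.

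First I would show that $c_j\ge2$ for some $j$ if and only if $w$ contains one of $3412,3421,4312,4321$, the patterns whose first two entries both exceed their last two. One direction is immediate: two positions $p_1<p_2\le j$ with values $>j$, together with the two positions $>j$ whose values are $\le j$ (there are exactly $c_j$ of each), form such a pattern. The converse is the delicate point, because $c_j$ uses $j$ both as a position-threshold and a value-threshold. Given an occurrence $p_1<p_2<p_3<p_4$, set $H=\min(w(p_1),w(p_2))$: if $p_2<H$ then the cut $j=p_2$ has two positions $\le j$ carrying values $>j$, so $c_{p_2}\ge2$; if instead $p_2\ge H$, then $w(p_3),w(p_4)\le H-1\le p_2-1<p_3$, so the cut $j=p_3-1$ has two positions $>j$ carrying values $\le j$, again giving $c_{p_3-1}\ge2$. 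Since three of the four patterns contain $321$, for a $321$-avoiding permutation this lemma reads simply: some $c_j\ge2$ if and only if $w$ contains $3412$.

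Next I would analyze an indecomposable permutation with $c_j=1$ for every $j$. The excedance ``arcs'' $[i,w(i))$ are then pairwise disjoint (an overlap would force some $c_j\ge2$) and cover every cut, hence they tile $\{1,\dots,n-1\}$; reading them left to right shows the excedances form a single increasing chain $1\mapsto w(1)\mapsto\cdots\mapsto n$, and the deficiency arcs tile symmetrically, so the two chains link into one orbit containing every non-fixed point. Consequently the number of cycles equals $1+(\text{number of fixed points})$, and the block fails to be a single cycle exactly when it has a fixed point $m$; as the block is indecomposable, the cuts on either side of $m$ are crossed, yielding $a<m<b$ with $w(a)>m=w(m)>w(b)$, a $321$ pattern. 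I expect this tiling/fixed-point analysis to be the main obstacle, as it is what converts the arithmetic condition $c_j\equiv1$ into the combinatorial statements about cycle count and about $321$.

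Finally I would assemble the equivalence. If $\ell(w)=\ell'(w)$, the sandwich $\ell'(w)\le\dep(w)\le\ell(w)$ of Observation \ref{obs:1} gives $\dep(w)=\ell(w)$, so $w$ avoids $321$ by Theorem \ref{thm:hequalsl}; moreover $\dep(w)=\ell'(w)$ forces $c_j=1$ at every crossed cut, so by the first lemma $w$ cannot contain $3412$. Conversely, if $w$ avoids $321$ and $3412$ then it avoids all four patterns above (the other three contain $321$), so every crossed cut has $c_j=1$; combining this with $321$-avoidance and the tiling analysis shows every block is a single cycle, whence $\dep(w)=\ell'(w)$ and, by Observation \ref{obs:2}, $\ell(w)=\ell'(w)$. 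This also recovers Edelman's ``unimodal'' class from the pattern-avoidance side, since the single-cycle blocks produced above are exactly the unimodal cycles.
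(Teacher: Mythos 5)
Your proof is correct, but it takes a genuinely different route from the paper's. The paper argues through reduced decompositions: it cites the dictionary results of \cite{tenner rdpp, tenner patt-bru} (a $321$-pattern forces a factor $s_ks_{k+1}s_k = t_{k,k+2}$, a $3412$-pattern forces $s_ks_{k-1}s_{k+1}s_k = t_{k-1,k+1}t_{k,k+2}$, and avoidance of both is equivalent to reduced words having no repeated letters), which immediately gives $\ell'(w) \le \ell(w)-2$ in the presence of either pattern; the converse is then an induction on $\ell(w)$ using the Bruhat order and a careful bijection between the inversions of $w$ and of $wt_{ij}$. You instead stay entirely inside the depth machinery already developed in the paper: the crossing-number identity $\dep(w)=\sum_j c_j(w)$, the finest direct-sum decomposition, and the two-sided squeeze $\dep(w)\ge n-m\ge \ell'(w)$ reduce the problem to ``every crossed cut has $c_j=1$ and every block is one cycle,'' after which your four-pattern lemma for $c_j\ge 2$ (whose two-case proof is sound, since $j$ serves as both a position and a value threshold) and the arc-tiling analysis of indecomposable blocks convert these conditions into avoidance of $3412$ and $321$. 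Two small details you should make explicit when writing this up: a fixed point of an indecomposable block of size at least $2$ cannot sit at either end of the block (otherwise the adjacent cut would have $c_j=0$), which is what guarantees both neighboring cuts are crossed; and single-point blocks must be treated as a trivial case of ``block equals one cycle.'' What each approach buys: the paper's proof plugs directly into the boolean-permutation framework that drives its later enumerative corollaries, while yours is self-contained at the level of one-line and cycle notation (no external reduced-word lemmas), characterizes $\dep(w)=\ell'(w)$ directly, and yields the interval-cycle block structure as a byproduct --- effectively reproving the paper's Lemma \ref{lem:boolean means cycles are blocks} and recovering Edelman's unimodal characterization (Corollary \ref{cor:unimodal}) without citing \cite{edelman}.
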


In the context of \cite{tenner patt-bru}, and the subsequent papers \cite{ragnarsson-tenner 1, ragnarsson-tenner 2} by Ragnarsson and the second author, such permutations were called \emph{boolean}.

The proof of Theorem \ref{thm:equivalent lengths} relies heavily on the main result of \cite{tenner rdpp}, where the second author provided a dictionary for translating between the one-line notation of a permutation and its reduced decompositions.  The second author extended this work in \cite{tenner patt-bru}.

\begin{lem}{\cite[Theorem 4.3]{tenner patt-bru}}\label{lem:boolean perm}
A permutation $w$ avoids both $321$ and $3412$ if and only if each (equivalently, any) of its reduced decompositions contains no repeated letters.
\end{lem}

Theorem~\ref{thm:equivalent lengths} will be proved using the previous lemma in conjunction with result of Dyer.

\begin{lem}{\cite[Theorem 1.1]{Dyer}}\label{lem:dyer erasing letters}
If $w = s_{i_1}\cdots s_{i_{\ell}}$ is a reduced decomposition for a permutation $w$ into simple reflections, then $\ell_T(w)$ is the minimum $p$ for which there exist indices $1 \le j_1 < \cdots < j_p \le \ell$ such that $e = s_{i_1} \cdots \widehat{s_{j_1}} \cdots \widehat{s_{j_p}} \cdots s_{\ell}$, where $\widehat{x}$ means to omit the symbol $x$.
\end{lem}

\begin{proof}[Proof of Theorem \ref{thm:equivalent lengths}]
Recall that the only relations among simple reflections are the Coxeter relations $(s_is_j)^{m(i,j)} = e$ where
$$m(i,j) = \begin{cases}
1 & i=j,\\
2 & |i-j| > 1,\text{ and}\\
3 & |i-j| = 1.
\end{cases}$$
Thus we know from Lemma~\ref{lem:dyer erasing letters} that $\ell_S(w) = \ell_T(w)$ if and only if a reduced decomposition for $w$ has no repeated letters.  This, by Lemma~\ref{lem:boolean perm} is equivalent to $w$ avoiding the patterns $321$ and $3412$.
\end{proof}

It is worth noting that we can also give an alternative proof to Theorem~\ref{thm:equivalent lengths}.  This alternative proof uses the following lemma.

\begin{lem}\label{lem:boolean means cycles are blocks}
If a permutation is both $321$- and $3412$-avoiding, then each cycle in its standard cycle notation consists of consecutive integers.
\end{lem}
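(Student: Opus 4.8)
The plan is to prove the contrapositive-friendly statement directly: assuming $w$ avoids both $321$ and $3412$, I will show each cycle of $w$ consists of consecutive integers. The key structural fact I intend to exploit is Lemma~\ref{lem:boolean perm}: a boolean permutation has $\ell(w) = |\supp(w)|$, meaning no simple reflection $s_i$ is repeated across (equivalently, within) its reduced decompositions. I will translate this support condition into a statement about which ``walls'' between adjacent positions are ever crossed. Concretely, if $s_i \notin \supp(w)$, then no reduced word of $w$ uses $s_i$, which forces $w$ to preserve the partition of $[n]$ into the blocks $\{1,\dots,i\}$ and $\{i+1,\dots,n\}$; that is, $w$ maps $\{1,\dots,i\}$ to itself.

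First I would make precise the claim that $s_i \notin \supp(w)$ implies $w(\{1,\dots,i\}) = \{1,\dots,i\}$. This is a standard consequence of the parabolic subgroup structure: $s_i \notin \supp(w)$ means $w$ lies in the parabolic subgroup generated by $S_n \setminus \{s_i\}$, which is exactly $\mathfrak{S}_{\{1,\dots,i\}} \times \mathfrak{S}_{\{i+1,\dots,n\}}$, and such $w$ fix each block setwise. Next I would identify the set $M = \{i : s_i \in \supp(w)\}$ of ``active walls'' and observe that $[n]$ is partitioned by the inactive walls $[n{-}1]\setminus M$ into maximal blocks of consecutive integers, each of which is mapped to itself by $w$. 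Within each such block, $w$ restricts to a permutation using \emph{all} of the simple reflections internal to that block (otherwise we could subdivide further), so each block is a single orbit of the subgroup action in the sense that no finer setwise-invariant sub-block exists.

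The main step is then to argue that within each maximal block $B = \{a, a+1, \dots, b\}$, the restriction $w|_B$ is a single cycle. Here is where I would lean on the boolean hypothesis again: since every internal wall $s_a, \dots, s_{b-1}$ lies in $\supp(w)$ but \emph{no letter repeats}, the restriction $w|_B$ is itself a boolean permutation that uses each of its $|B|-1$ simple reflections exactly once, so $\ell(w|_B) = |B|-1 = \ell'(w|_B)$ by Lemma~\ref{lem:boolean perm} and Theorem~\ref{thm:equivalent lengths} (or directly). A permutation of a $|B|$-element set with reflection length $|B|-1$ has exactly one cycle. Hence $w|_B$ is a single cycle on the consecutive integers of $B$, and since the blocks partition $[n]$, every cycle of $w$ consists of consecutive integers.

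The hard part will be the bookkeeping that connects the support/parabolic language to the cycle structure cleanly, and in particular justifying that a maximal block $B$ (one not split by any inactive wall) really is a single cycle rather than a product of several cycles on consecutive sub-intervals. The subtlety is that setwise invariance of $B$ under $w$ does not by itself force transitivity; it is precisely the \emph{boolean, no-repeated-letter} condition that rules out $w$ decomposing $B$ into smaller invariant consecutive blocks, because any such decomposition would correspond to an unused internal wall, contradicting $B$'s maximality as an active block. I would make sure to state this transitivity argument carefully, perhaps by noting that if $w(\{a,\dots,c\}) = \{a,\dots,c\}$ for some $a \le c < b$, then $s_c \notin \supp(w)$, contradicting $c \in M$. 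Once this is pinned down, the consecutive-cycle conclusion follows immediately.
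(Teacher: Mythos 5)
Your block decomposition is sound: the observation that $s_i \notin \supp(w)$ forces $w$ to preserve $\{1,\dots,i\}$, the partition of $[n]$ into maximal blocks by inactive walls, and the fact that $w|_B$ is boolean with $\supp(w|_B)$ equal to all $|B|-1$ internal walls (so $\ell(w|_B)=|B|-1$ by Lemma~\ref{lem:boolean perm}) are all correct. Your primary route to transitivity---invoking Theorem~\ref{thm:equivalent lengths} to get $\ell'(w|_B)=\ell(w|_B)=|B|-1$, hence a single cycle---is logically valid given that the theorem is proved earlier in the paper by independent means. However, it is worth knowing that the paper introduces this lemma precisely to give an \emph{alternative} proof of Theorem~\ref{thm:equivalent lengths}, so a proof of the lemma that leans on that theorem is circular for the lemma's intended purpose. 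The paper instead argues elementarily from Lemma~\ref{lem:boolean perm} alone: take a repeat-free reduced decomposition, view each letter $s_k$ as the cycle $(k\ k+1)$ on the interval $[k,k+1]$, and repeatedly merge non-disjoint cycles using the fact that the product of a cycle on the interval $[a,b]$ with a cycle on the interval $[b,c]$ is a cycle on $[a,c]$; the merging preserves the property that every cycle is an interval of integers and terminates in the disjoint cycle decomposition of $w$.

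The genuine gap is in your fallback ``(or directly)'' argument, which you yourself flag as the hard part. Ruling out invariant \emph{consecutive} sub-blocks $\{a,\dots,c\}$ via unused walls does not rule out $w|_B$ splitting into several interleaved, non-consecutive cycles, and full support does not imply transitivity: the permutation $3412=(1\,3)(2\,4)$ has $\supp = \{s_1,s_2,s_3\}$ (every wall active) yet has two cycles, neither an interval, and no proper initial segment is invariant, so your wall-counting criterion never fires. What excludes such configurations is booleanness itself, and that exclusion is exactly the content of Theorem~\ref{thm:equivalent lengths}---so the ``direct'' route silently collapses back onto the theorem you were trying to avoid. A genuinely direct patch exists and is close to the paper's argument: $w|_B$ is a product of the distinct adjacent transpositions $(k\ k+1)$, $a \le k < b$, each used exactly once in some order; these transpositions are the edges of a tree (a path) on the vertex set $B$, and a product of the edges of a tree, each used once in any order, is a single cycle on all its vertices. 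With that fact in place of your wall-counting step, your proof closes without any appeal to Theorem~\ref{thm:equivalent lengths}.
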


\begin{proof}
The product of two cycles in which one consists of the integers $[a,b]$ and the other consists of the integers $[b,c]$ is a cycle consisting of the integers $[a,c]$.  By Lemma~\ref{lem:boolean perm}, a permutation avoiding $321$ and $3412$ has reduced decompositions with no repeated letters.  If each $s_k$ in such a reduced decomposition is written as the cycle $(k\ k+1)$, then we see that
\begin{itemize}
\item every cycle in the resulting product consists of consecutive integers, and
\item each letter appears in at most two cycles.
\end{itemize}
The first sentence of this proof guarantees that these two properties are maintained as we start to combine cycles by multiplication when they are not disjoint.  The process halts when no such multiplication is possible, meaning that no letter appears in more than one cycle.
\end{proof}

With Lemma~\ref{lem:boolean means cycles are blocks} in hand, the proof of Theorem~\ref{thm:equivalent lengths} only requires showing that the boolean Coxeter elements in $\mf{S}_n$ ($321$- and $3412$-avoiding permutations consisting of a single $n$-cycle) have length $n-1$.  This is easily done by induction on $n$.

The class of permutations defined in Theorem~\ref{thm:equivalent lengths} is entry P0006 of \cite{tenner dppa}, enumerated by sequence A001519 of \cite{oeis}, and enumerated by length in sequence A105306 of the same database.  These enumerations were calculated by the second author in \cite{tenner patt-bru}, in the context of boolean permutations.

\begin{cor}[\cite{tenner patt-bru}]\label{cor:enumeration}
Fix a positive integer $n$.  Then we have the following enumerative results, where $\{F_0, F_1, \ldots\}$ are the Fibonacci numbers.
\begin{align*}
|\{w \in \mf{S}_n : \ell_T(w) =\dep(w)= \ell_S(w)\}| &= F_{2n-1}\\
|\{w \in \mf{S}_n : \ell_T(w) = \dep(w)=\ell_S(w) =k\} &= \sum_{i=1}^{k} \binom{n-i}{k+1-i}\binom{k-1}{i-1}
\end{align*}
\end{cor}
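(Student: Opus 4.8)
The plan is to reduce both enumerations to a count of \emph{boolean} permutations and then enumerate those via a support-and-cycle decomposition. First I would identify the set precisely: by Observation~\ref{obs:2}, the condition $\ell'(w)=\dep(w)=\ell(w)$ is equivalent to $\ell'(w)=\ell(w)$, and by Theorem~\ref{thm:equivalent lengths} this holds exactly when $w$ avoids $321$ and $3412$. So both cardinalities count boolean permutations---the first all of them in $\mf{S}_n$, the second those of length $k$.

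Next I would set up the structural decomposition using the two boolean lemmas. By Lemma~\ref{lem:boolean means cycles are blocks} every cycle of a boolean $w$ is a block of consecutive integers, and by Lemma~\ref{lem:boolean perm} no reduced word of $w$ repeats a letter, so $\ell(w)=|\supp(w)|$. A cycle on a block of $c$ consecutive integers is a single $c$-cycle avoiding $321$ and $3412$, i.e.\ a Coxeter element of the corresponding copy of $\mf{S}_c$; there are exactly $2^{c-2}$ of these (equivalently, acyclic orientations of the type-$A_{c-1}$ path, or Edelman's unimodal $c$-cycles \cite{edelman}), and such a cycle uses a run of $c-1$ consecutive simple reflections. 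Conversely, a subset $A\subseteq\{s_1,\dots,s_{n-1}\}$ together with a choice of one Coxeter element for each maximal run of $A$ reconstructs a unique boolean permutation. Thus a boolean permutation is exactly the data of a support subset, its decomposition into maximal runs, and one of $2^{m-1}$ Coxeter elements for each run of $m$ letters.

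For the total count I would run a two-state transfer matrix across the positions $s_1,\dots,s_{n-1}$, recording whether the current position lies in $A$ and weighting each chosen letter by $1$ if it begins a run and by $2$ if it extends one (so a run of $m$ letters accrues $2^{m-1}$). Writing $a(N),b(N)$ for the weighted counts on $N$ positions ending with the last position in, respectively out of, $A$, one gets $a(N)=2a(N-1)+b(N-1)$ and $b(N)=a(N-1)+b(N-1)$; hence $f(N):=a(N)+b(N)$ satisfies $f(N)=3f(N-1)-f(N-2)$ with $f(0)=1$ and $f(1)=2$, which solves to $f(N)=F_{2N+1}$. Taking $N=n-1$ gives $F_{2n-1}$.

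For the graded count I would group boolean permutations of length $k$ by their number $i$ of nontrivial cycles. There are $\binom{k-1}{i-1}$ compositions of $k$ recording the run lengths, $\binom{n-k}{i}$ ways to interleave the $i$ blocks with the $n-k-i$ fixed points, and $2^{k-i}$ choices of Coxeter elements, which yields $\sum_{i}2^{k-i}\binom{k-1}{i-1}\binom{n-k}{i}$. The main obstacle is that this natural expression matches the stated $\sum_{i}\binom{n-i}{k+1-i}\binom{k-1}{i-1}$ only after summing over $i$, not term by term; reconciling the two requires a Vandermonde-type binomial identity (equivalently, re-encoding the $k-i$ orientation bits as a single choice from an enlarged ground set so that $2^{k-i}\binom{n-k}{i}$ is redistributed into $\binom{n-i}{k+1-i}$). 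This graded reconciliation, rather than the decomposition itself, is the technical crux; alternatively one may simply invoke the enumeration of boolean permutations by length already carried out in \cite{tenner patt-bru}.
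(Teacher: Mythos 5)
Your proposal is correct, and it takes a genuinely different route from the paper, because the paper offers no self-contained proof of this corollary at all: having identified the permutations with $\ell'(w)=\dep(w)=\ell(w)$ as the boolean ($321$- and $3412$-avoiding) ones via Theorem~\ref{thm:equivalent lengths}, the paper simply attributes the two enumerations to \cite{tenner patt-bru}. You make the same reduction (Observation~\ref{obs:2} plus Theorem~\ref{thm:equivalent lengths}), but then actually derive the counts from structure theory available in the paper: by Lemma~\ref{lem:boolean perm} and Lemma~\ref{lem:boolean means cycles are blocks}, a boolean permutation is exactly a choice of support $A\subseteq\{s_1,\dots,s_{n-1}\}$ together with a Coxeter element ($2^{m-1}$ choices) on each maximal run of $m$ consecutive letters of $A$. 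Your transfer-matrix recurrence $f(N)=3f(N-1)-f(N-2)$ with $f(0)=1$, $f(1)=2$ is the standard bisection of the Fibonacci sequence, so $f(n-1)=F_{2n-1}$ is correct and the first identity is fully proved. What your approach buys is a proof internal to this paper; what the paper's citation buys is brevity and consistency with crediting the result to \cite{tenner patt-bru}.

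The one incomplete point is the graded count. Your decomposition correctly yields
\[
|\{w \in \mf{S}_n : \ell'(w)=\dep(w)=\ell(w)=k\}| = \sum_{i\ge 1} 2^{k-i}\binom{k-1}{i-1}\binom{n-k}{i}
\]
(for $n=4$, $k=2$ this is $4+1=5$, matching $2314$, $2143$, $3124$, $1342$, $1423$), but, as you yourself observe, this agrees with the stated $\sum_i \binom{n-i}{k+1-i}\binom{k-1}{i-1}$ only after summation, not term by term (the stated sum gives $3+2$ in the same case), and you do not prove the reconciling binomial identity. So, strictly speaking, you have proved the corollary with an equal but differently packaged formula; to obtain the formula exactly as stated you must either establish that identity (it is true, and can be checked by routine generating-function manipulation or by giving the right-hand side its own combinatorial interpretation) or fall back, as you suggest, on the enumeration by length in \cite{tenner patt-bru}---which is precisely what the paper does. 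This is a presentational gap rather than a flaw in the argument, but a complete write-up should close it.
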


As mentioned earlier, Theorem~\ref{thm:equivalent lengths} recovers a result of Edelman, although his result was phrased slightly differently.  It follows that the class of permutations defined in \cite{edelman} coincides with the set of permutations avoiding the patterns $321$ and $3412$.

\begin{cor}\label{cor:unimodal}
Unimodal permutations are exactly those permutations which avoid the patterns $321$ and $3412$.
\end{cor}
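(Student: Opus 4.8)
The plan is to deduce this immediately by chaining two equivalences rather than arguing from scratch. Edelman's Theorem~3.1, as recalled in the paragraph preceding Theorem~\ref{thm:equivalent lengths}, defines a permutation to be \emph{unimodal} precisely when $\ell'(w) = \ell(w)$ (equivalently, when its cycles are disjoint intervals, each of which is unimodal). On the other hand, Theorem~\ref{thm:equivalent lengths} of this paper characterizes that very same equality $\ell'(w) = \ell(w)$ as being equivalent to $w$ avoiding both $321$ and $3412$. Stringing these together yields
$$w \text{ unimodal} \iff \ell'(w) = \ell(w) \iff w \text{ avoids } 321 \text{ and } 3412,$$
which is exactly the claim. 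On this route the corollary requires no new computation beyond invoking Edelman's result together with Theorem~\ref{thm:equivalent lengths}.

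If instead one wanted a proof that is self-contained and does not route through the length identities, I would argue directly at the level of cycle structure. Lemma~\ref{lem:boolean means cycles are blocks} already supplies half of Edelman's structural description: a $321$- and $3412$-avoiding permutation has every cycle supported on a block of consecutive integers. It would then remain to check (i) that each such cyclic block is moreover unimodal in Edelman's sense, and (ii) conversely, that a permutation whose cycles are unimodal blocks of consecutive integers cannot contain $321$ or $3412$. For (i) I would take a single cycle on $\{a, a+1, \ldots, b\}$ inside a boolean permutation and read off its one-line form, using the no-repeated-letter description of Lemma~\ref{lem:boolean perm} to constrain its shape. For (ii) I would observe that any occurrence of $321$ or $3412$ must draw all of its entries from within a single block—since distinct blocks are order-isolated intervals, no pattern can straddle two of them—and then rule out such an occurrence in a unimodal cycle.

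The only genuine subtlety in either route is aligning vocabulary: one must be careful that Edelman's ``unimodal'' is exactly the property $\ell'(w) = \ell(w)$ (or its explicit cycle description), and not some a priori different notion. Once that identification is pinned down, the corollary is immediate via the first route. The second, direct route is more work, and I expect its main obstacle to be step (i)—pinning down the precise unimodal shape of an individual boolean cycle from the reduced-decomposition data—whereas step (ii) should follow cleanly from the order-isolation of the blocks.
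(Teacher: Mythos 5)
Your primary route is exactly the paper's own proof: it chains Edelman's Theorem~3.1 (unimodal $\iff \ell'(w) = \ell(w)$) with Theorem~\ref{thm:equivalent lengths} ($\ell'(w) = \ell(w) \iff w$ avoids $321$ and $3412$), with no further computation. The alternative cycle-structure route you sketch via Lemma~\ref{lem:boolean means cycles are blocks} is also noted (without details) in the paper immediately after the corollary, so both of your observations align with the paper's treatment.
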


\begin{proof}
In \cite[Theorem 3.1]{edelman}, it was shown that a permutation $w$ satisfies $\ell_S(w) = \ell_T(w)$ if and only if $w$ is unimodal.  It follows from Theorem~\ref{thm:equivalent lengths}, then, that unimodal permutations are precisely those that avoid $321$ and $3412$.
\end{proof}


For a permutation $w \in \mf{S}_n$, let $\supp(w) \subseteq S_n$ be the set of letters appearing in reduced decompositions of $w$.  We call this set the \emph{support} of $w$.

Consider a permutation $w \in \mf{S}_n$.  We now have the following equivalent statements.
\begin{itemize}
\item $\ell_S(w) = \ell_T(w)$
\item all paths from $e$ to $w$ in the Bruhat graph have $\ell_S(w) = \ell_T(w)$ edges
\item $w$ avoids $321$ and $3412$
\item $|\supp(w)| = \ell_S(w) = \ell_S(w')$
\item the reduced decompositions of $w$ contain no repeated letters
\end{itemize}
We can now explore when, provided that $\ell_S(w) = \ell_T(w)$, all of the edges along all of these paths from $e$ to $w$ in the Bruhat graph are weighted by $1$. (And thus produce a path whose edge weights add up to depth.) That is, we can consider when all of these edges represent simple reflections $s \in S_n$, not elements of $T_n \setminus S_n$.

\begin{defn}
Suppose that a permutation $w$ has the property that if $s_k \in \supp(w)$ then $s_{k \pm 1} \not\in \supp(w)$.  Then we will say that $w$ is \emph{free}.
\end{defn} 

Note that all of the letters in a reduced decomposition of a free permutation $w$ are distinct, and they all commute with each other.  Thus $w$ has $\ell_S(w)!$ reduced decompositions.  The following lemma follows from the main result in \cite{tenner rdpp}.

\begin{lem}[See \cite{tenner rdpp}]\label{lem:free}
A permutation $w$ is free if and only if it avoids $231$, $312$, and $321$.
\end{lem}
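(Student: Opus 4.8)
The plan is to prove both directions by translating the ``free'' condition, which is a statement about the support $\supp(w) \subseteq S_n$, into a statement about the one-line notation, and then matching this against the three forbidden patterns. The key fact I would invoke is that for a permutation $w$, the simple reflection $s_k = (k\ k{+}1)$ lies in $\supp(w)$ if and only if $k$ is an inversion top in the sense that there exist positions $a \le k < k{+}1 \le b$ with $w(a) > w(b)$; more usefully, $s_k \in \supp(w)$ precisely when $w$ does \emph{not} have the form where $\{w(1),\dots,w(k)\} = \{1,\dots,k\}$, i.e.\ when $w$ ``crosses'' the gap between positions/values $k$ and $k{+}1$. A permutation is free exactly when no two adjacent generators $s_k, s_{k+1}$ both appear, so the obstruction is a value being carried across two consecutive gaps at once.

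First I would prove the forward direction by contrapositive: if $w$ contains any of $231$, $312$, or $321$, then $w$ is not free. The cleanest way is to show each of these three patterns forces $s_k, s_{k+1} \in \supp(w)$ for some $k$. For $321$ this is immediate from Lemma~\ref{lem:patterns and factors}(a), which supplies a reduced word containing $s_k s_{k+1} s_k$, so both $s_k$ and $s_{k+1}$ are in the support. For $231$ and $312$, I would argue directly from the crossing characterization: a $231$-pattern in positions with values forming the relative order $2,3,1$ witnesses that some single small value sits to the right of two larger values spanning two consecutive generator-gaps, forcing two adjacent $s_k, s_{k+1}$ into the support (and symmetrically for $312$, where a large value sits left of two smaller ones). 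In each case adjacency of the two generators contradicts freeness.

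For the reverse direction I would show that if $w$ avoids $231$, $312$, and $321$, then $w$ is free. Note first that avoiding $321$ and $3412$ (both consequences once we know the stronger avoidance set, since $231,312,321$-avoidance is quite restrictive) puts us inside the boolean permutations of Lemma~\ref{lem:boolean perm}, so every reduced decomposition already has distinct letters; freeness then only additionally requires that no two \emph{adjacent} generators co-occur. Suppose for contradiction that $s_k, s_{k+1} \in \supp(w)$. Because $w$ is $321$-avoiding its reduced words have no $s_k s_{k+1} s_k$ factor, so $s_k$ and $s_{k+1}$ each appear exactly once and, by the boolean structure, they must commute with everything between them in some reduced word only if they do not interact; the failure of commutation between $s_k$ and $s_{k+1}$ then produces, via Lemma~\ref{lem:patterns and factors}, one of the forbidden patterns. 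Concretely, both $s_k$ and $s_{k+1}$ being in the support means the values $k, k{+}1, k{+}2$ are genuinely permuted across both gaps, and a short case analysis on their relative one-line positions yields a $231$, a $312$, or a $321$ occurrence, the contradiction completing the proof.

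The main obstacle I anticipate is the reverse direction: converting ``$s_k$ and $s_{k+1}$ both in support'' into a concrete forbidden pattern requires a careful case analysis of how the three values $k, k{+}1, k{+}2$ are arranged in one-line notation relative to the crossing positions. I would organize this by using the support-crossing criterion to pin down which of these values lie left versus right of the two gaps, reducing to a handful of configurations, each of which visibly contains one of $231$, $312$, $321$. Leaning on Lemma~\ref{lem:patterns and factors} and the dictionary of \cite{tenner rdpp} (which directly relates braid-type factors in reduced words to these patterns) should keep the casework short rather than exhaustive.
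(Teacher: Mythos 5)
Your forward direction is essentially sound, and in fact can be made simpler and uniform: in each of $231$, $312$, and $321$, the first and last entries of an occurrence form an inversion $(a,c)$ with $c-a \ge 2$, and by your crossing criterion (which is correct, and justified by Matsumoto--Tits plus the standard description of parabolic subgroups of $\mf{S}_n$) such an inversion places $s_m$ in $\supp(w)$ for \emph{every} $a \le m \le c-1$, hence at least two consecutive generators, so $w$ is not free; the appeal to Lemma~\ref{lem:patterns and factors}(a) for $321$ is correct but unnecessary. Note that the paper itself gives no proof of this lemma at all --- it is quoted from \cite{tenner rdpp} --- so a self-contained argument like yours is by necessity a different route; but that puts the burden on the reverse direction, and there your argument breaks down.

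The gap is the reverse direction's concrete mechanism. The claim that $s_k, s_{k+1} \in \supp(w)$ forces the values $k, k+1, k+2$ to be ``genuinely permuted,'' so that a case analysis of their relative one-line positions yields a forbidden pattern, is false. Take $w = 51234$ and $k=2$: here $\supp(w) = \{s_1,s_2,s_3,s_4\}$, so $s_2, s_3 \in \supp(w)$, yet the values $2,3,4$ occur in increasing order (as do the entries in positions $2,3,4$), so no case analysis on them produces a pattern; the actual $312$ occurrence is $5\,1\,2$, involving none of the values near $k$. Your fallback --- that ``the failure of commutation \dots produces, via Lemma~\ref{lem:patterns and factors}, one of the forbidden patterns'' --- runs that lemma backwards: it states pattern $\Rightarrow$ factor, whereas you need (support structure) $\Rightarrow$ pattern, a converse the paper never supplies. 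The repair is to work with the actual witnesses of the two crossings: choose inversions $(i,j)$ with $i \le k < j$ and $(i',j')$ with $i' \le k+1 < j'$. If either inversion has its positions at distance at least $2$, then any intermediate entry $w(m)$ completes an occurrence of $231$, $321$, or $312$ according as $w(m)$ is above, between, or below the two inverted values --- a contradiction. Otherwise both inversions join adjacent positions, forcing them to be $(k,k+1)$ and $(k+1,k+2)$, whence $w(k) > w(k+1) > w(k+2)$ is a $321$, again a contradiction. With this replacement the proof is complete and self-contained, and your detour through $3412$-avoidance and Lemma~\ref{lem:boolean perm} becomes unnecessary.
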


\begin{cor}\label{cor:enumerating free}
The number of free permutations in $\mf{S}_n$ is the Fibonacci number $F_{n+1}$.
\end{cor}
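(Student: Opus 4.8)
The plan is to set up a bijection between free permutations in $\mf{S}_n$ and the subsets of $\{1, 2, \ldots, n-1\}$ that contain no two consecutive integers, and then to count the latter by a standard Fibonacci recurrence. The whole argument hinges on the observation, recorded in the note preceding Lemma \ref{lem:free}, that a free permutation is completely determined by its support: since no two letters of $\supp(w)$ are adjacent, all letters commute and no letter repeats, so $w$ is the (order-independent) product $\prod_{s_k \in \supp(w)} s_k$. Thus the map $w \mapsto \supp(w)$ is injective on free permutations.

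First I would show this map lands in, and surjects onto, the collection of ``independent'' subsets $A \subseteq \{1,\ldots,n-1\}$ with no two consecutive elements. That $\supp(w)$ has this property is immediate from the definition of free. For surjectivity, given such an $A$, consider $w_A = \prod_{k \in A} s_k$; because the elements of $A$ are pairwise non-adjacent, the generators $s_k$ are pairwise commuting and the transpositions $(k\ k{+}1)$ are disjoint. Hence $w_A$ is an involution that is a product of $|A|$ disjoint adjacent transpositions, so $\inv(w_A) = |A|$ and the word has length $|A|$; this forces the word to be reduced with $\supp(w_A) = A$, and $w_A$ is free. Therefore free permutations in $\mf{S}_n$ are in bijection with the independent subsets of $\{1, \ldots, n-1\}$.

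Next I would enumerate these subsets. Let $g(m)$ denote the number of subsets of $\{1, \ldots, m\}$ with no two consecutive elements. Conditioning on whether $m$ belongs to the subset yields $g(m) = g(m-1) + g(m-2)$, since omitting $m$ leaves an independent subset of $\{1,\ldots,m-1\}$ while including $m$ forbids $m-1$ and leaves one of $\{1,\ldots,m-2\}$. With $g(0) = 1$ and $g(1) = 2$, induction gives $g(m) = F_{m+2}$. Taking $m = n-1$ produces $g(n-1) = F_{n+1}$, which is the desired count. (As a sanity check: for $n \le 4$ this gives $1, 2, 3, 5$, matching the empty support, the supports $\{s_k\}$, and, for $n=4$, the additional independent pair $\{s_1, s_3\}$.)

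The argument is essentially routine once the support characterization is in place, so I do not expect a genuine obstacle; the one point requiring care is the surjectivity/reducedness claim, namely verifying that $\prod_{k \in A} s_k$ really is reduced with support exactly $A$ rather than collapsing to something shorter. This is precisely where the non-adjacency of $A$ is used, via the disjoint-transposition inversion count, so I would state that step explicitly rather than leaving it implicit. Alternatively, one could bypass the direct bijection entirely by invoking Lemma \ref{lem:free}, which identifies free permutations with the $\{231, 312, 321\}$-avoiding permutations, together with the known Fibonacci enumeration of that class; but the support-to-independent-set correspondence is more self-contained and makes the Fibonacci recurrence transparent.
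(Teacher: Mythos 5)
Your proof is correct, but it takes a somewhat different route from the paper's. The paper runs the Fibonacci recurrence directly on the permutations: it partitions the free elements of $\mf{S}_n$ according to whether $s_{n-1}$ lies in the support, puts the first class in bijection with the free elements of $\mf{S}_{n-1}$ and the second class (where $s_{n-2}$ is forced out of the support) in bijection with the free elements of $\mf{S}_{n-2}$, and finishes with the base cases $n=1,2$. You instead transfer the problem to subsets once and for all: the support map is a bijection from free permutations in $\mf{S}_n$ onto the subsets of $\{1,\ldots,n-1\}$ with no two consecutive elements, and you then count those subsets by the standard recurrence. The combinatorial engine is the same in both cases---conditioning on the largest generator---but your version makes explicit a structural fact that the paper's recursive correspondences use only implicitly, namely that a free permutation equals the (order-independent) product of its support letters and that every non-adjacent subset actually occurs as a support. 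Your verification that $\prod_{k \in A} s_k$ is reduced with support exactly $A$, via the disjoint-transposition inversion count $\inv(w_A) = |A|$, is precisely the step that needs care, and you handle it correctly. The paper's version buys brevity; yours buys a self-contained bijective statement from which one also reads off the refinement by length, $|\{w \in \mf{S}_n : w \text{ is free and } \ell(w)=k\}| = \binom{n-k}{k}$, with no additional work.
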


\begin{proof}
The free permutations in $\mf{S}_n$ can be partitioned into two sets, determined by whether or not $s_{n-1}$ appears in their supports.  Those for which $s_{n-1}$ does not appear are in bijective correspondence with free permutations in $\mf{S}_{n-1}$.  Those $w$ for which $s_{n-1} \in \text{supp}(w)$ must have $s_{n-2}\not\in\text{supp}(w)$, meaning that they are in bijective correspondence with free permutations in $\mf{S}_{n-2}$.  Thus
$$|\{w \in \mf{S}_n : w\text{ is free}\}| = |\{w \in \mf{S}_{n-1} : w\text{ is free}\}| + |\{w \in \mf{S}_{n-2} : w\text{ is free}\}|.$$
Observing that $|\{w \in \mf{S}_1 : w\text{ is free}\}| = 1$ and $|\{w \in \mf{S}_2 : w\text{ is free}\}| = 2$ completes the proof.
\end{proof}

It is also possible to prove Corollary~\ref{cor:enumerating free} by noting, using Lemma~\ref{lem:free}, that the only allowable positions for $n$ in a free permutation $w$ are $w(n-1)$ and $w(n)$.

\begin{cor}\label{cor:free}
Suppose that $w\in\mf{S}_n$ satisfies the property $\ell_S(w) = \ell_T(w)$.  Then $w$ is free (that is, $w$ avoids $231$, $312$, and $321$) if and only if every decomposition $w = t_1 \cdots t_{\ell_S(w)}$ with $t_i \in T_n$ actually satisfies $t_i \in S_n$.
\end{cor}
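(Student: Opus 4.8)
The plan is to reduce both implications to the explicit structure of free permutations together with the elementary behavior of transpositions on cycles. Throughout I will use that the standing hypothesis $\ell(w)=\ell'(w)$ makes $w$ boolean (Theorem~\ref{thm:equivalent lengths}), so by Lemma~\ref{lem:boolean means cycles are blocks} each cycle of $w$ is a block of consecutive integers, and that $w$ is free exactly when it avoids $231$, $312$, and $321$ (Lemma~\ref{lem:free}), equivalently when $\supp(w)$ contains no two adjacent simple reflections (the definition of free). Since, as noted just after that definition, every reduced decomposition of a free permutation consists of distinct, pairwise commuting letters, a free $w$ is simply the product $w=\prod_{s_k\in\supp(w)}s_k$ of pairwise non-adjacent adjacent transpositions; in particular $w$ is an involution whose non-trivial cycles are exactly the pairs $\{k,k+1\}$ with $s_k\in\supp(w)$.

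For the forward direction, suppose $w$ is free and write it as above, with $r=\ell'(w)$ disjoint $2$-cycles. Given any factorization $w=t_1\cdots t_r$ into $r=\ell'(w)$ transpositions, I will group the $t_i$ according to the connected components $V_1,\dots,V_p$ of the graph on $[n]$ whose edges are the $t_i$ (equivalently, the orbits of the group they generate). Transpositions supported on distinct components commute, so $w=w_1\cdots w_p$, where $w_\alpha$ is the product of the $t_i$ supported on $V_\alpha$; writing $e_\alpha$ for the number of such edges and $v_\alpha=|V_\alpha|$, the equality $\ell'(w)=r=\sum_\alpha e_\alpha$ together with subadditivity of $\ell'$ forces $\ell'(w_\alpha)=e_\alpha$ for each $\alpha$. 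Because $V_\alpha$ is connected we have $e_\alpha\ge v_\alpha-1$, while always $\ell'(w_\alpha)\le v_\alpha-1$; hence $e_\alpha=v_\alpha-1$ and $w_\alpha$ is a single $v_\alpha$-cycle. Since every cycle of $w$ has length at most $2$, this gives $v_\alpha=2$, $e_\alpha=1$, and $w_\alpha$ equal to the adjacent transposition on $V_\alpha=\{k,k+1\}$, namely $s_k\in S_n$. Thus every $t_i$ is simple.

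For the converse I argue contrapositively: if $w$ is not free then, by the definition, some adjacent pair $s_k,s_{k+1}$ lies in $\supp(w)$, and since the cycles of $w$ are consecutive blocks, the integers $k,k+1,k+2$ lie in a common cycle $C$, which therefore has length $m\ge 3$ on a block $\{a,\dots,a+m-1\}$. I will build a minimal factorization of $w$ containing a non-simple reflection by splitting $C$: the elements $a$ and $a+2$ both lie in $C$ (as $a+2\le a+m-1$), so $t_{a,a+2}$ joins two points of one cycle and hence $wt_{a,a+2}$ has one more cycle than $w$, giving $\ell'(wt_{a,a+2})=\ell'(w)-1$. Choosing any minimal factorization $wt_{a,a+2}=t_1\cdots t_{r-1}$ yields $w=t_1\cdots t_{r-1}\,t_{a,a+2}$, a factorization into $r=\ell'(w)$ transpositions—hence minimal—that contains the non-simple reflection $t_{a,a+2}$. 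Concatenating this with arbitrary minimal factorizations of the remaining cycles completes the construction.

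The routine ingredients (commutation of disjoint-support transpositions, the bound $\ell'(u)\le|V|-1$ for a permutation of a set $V$, and that a same-cycle transposition raises the cycle count by one) are standard. The crux is the forward direction: the component-by-component analysis showing that minimality forces each component to be a tree whose transpositions multiply to a single full-length cycle, which then pins each factor down to an adjacent transposition once all cycles of $w$ have length at most two. This is where I expect to spend the most care, in particular in justifying the equality $e_\alpha=v_\alpha-1$ and the conclusion that a connected set of $v_\alpha-1$ transpositions on $V_\alpha$ must produce a single $v_\alpha$-cycle.
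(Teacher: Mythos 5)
Your proof is correct, but it takes a genuinely different route from the paper's, which argues at the level of words rather than cycles. For the direction ``free $\Rightarrow$ every minimal factorization is simple,'' the paper observes that in $w=t_1\cdots t_\ell$ each successive product $w,\, wt_\ell,\, wt_\ell t_{\ell-1},\ldots$ must strictly drop in length, and since the only inversions of a free permutation are adjacent pairs $\{i,i+1\}$, each factor is forced to be a simple reflection (with an implicit induction as the $2$-cycles are stripped off). You argue globally instead: in the graph on $[n]$ whose edges are the $t_i$, minimality plus subadditivity of $\ell'$ gives $\ell'(w_\alpha)=e_\alpha$ on each component, and the sandwich $v_\alpha-1\le e_\alpha=\ell'(w_\alpha)\le v_\alpha-1$ forces each component to be a tree whose edge-product is a single cycle of $w$; freeness caps cycle lengths at $2$, so every component is a single adjacent edge $s_k$. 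This buys an induction-free argument and isolates a general principle about factorizations of length $\ell'$. For the converse, the paper commutes letters in a reduced word $\alpha\, s_k\, \beta\, s_{k+1}\, \gamma$ and substitutes $s_ks_{k+1}=t_{k,k+2}s_k$, whereas you split a cycle of length at least $3$ --- which exists because $w$ is boolean with consecutive-block cycles (Lemma~\ref{lem:boolean means cycles are blocks}) --- by the non-simple reflection $t_{a,a+2}$ and extend to a minimal factorization; this trades word combinatorics for cycle counting. Two small things to tighten: discard the edgeless singleton components of your graph; and your claim that $s_k,s_{k+1}\in\supp(w)$ place $k,k+1,k+2$ in one cycle silently uses the standard fact that $s_k\in\supp(w)$ exactly when $w$ does not stabilize $\{1,\ldots,k\}$ setwise, i.e., when some cycle crosses the gap between $k$ and $k+1$; this is not stated in the paper, so either justify it or argue instead that a boolean permutation all of whose cycles have length at most $2$ is a product of disjoint, non-adjacent simple reflections and hence free, so non-freeness directly yields a cycle of length at least $3$. (Your final sentence about concatenating with factorizations of the remaining cycles is redundant: the minimal factorization of $wt_{a,a+2}$ already accounts for all cycles.)
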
 

\begin{proof}
It follows from Theorem~\ref{thm:equivalent lengths} and Lemma~\ref{lem:boolean perm} that any reduced decomposition of $w$ consists of distinct letters.

Suppose that $w$ is not free.  Without loss of generality, we can assume that $w$ has a reduced decomposition of the form
$$\alpha\ s_k \ \beta\ s_{k+1} \ \gamma,$$
where $\alpha$, $\beta$, and $\gamma$ are products of simple reflections.  It is possible to use Coxeter relations to move the letters of $\beta$ in order to yield a reduced decomposition of $w$ having the form $\alpha' s_ks_{k+1}\gamma'$ : any letter $s_j \in \beta$ with $j<k$ moves past $s_{k+1}$ to the right, and any letter $s_j$ with $j>k+1$ moves past $s_k$ to the left.  But then we see the following equivalence
$$\alpha' \ s_ks_{k+1} \ \gamma' = \alpha' \ t_{k,k+2}s_k\ \gamma',$$
giving an undesirable decomposition of $w$ into $\ell$ reflections.

Now suppose that $w$ is free.  Then, for $k>1$ the only $k$-cycles in the standard cycle form of $w$ must be $2$-cycles of the form $(i,i+1)$.  In a decomposition of $w$ into reflections $t_1 t_2 \cdots t_{\ell}$, where $\ell = \ell_T(w)$, each successive permutation $w$, $wt_{\ell}$, $wt_{\ell}t_{\ell-1}, \ldots$ must have smaller length then the preceding permutation.  However, the only inversions in the permutation are of the form $\{i,i+1\}$, and so each $t_j$ must be a simple reflection.
\end{proof}

\begin{ex}
Consider the permutation $231 \in \mf{S}_3$.  By Lemma~\ref{lem:free}, we know that $231$ is not free.  It is easy to compute $\ell_S(231) = \ell_T(231) = 2$, and we see that
$$231 = s_1s_2 = t_{13}s_1.$$
\end{ex}

The class of permutations described in Corollary~\ref{cor:free} is entry P0026 of \cite{tenner dppa}, enumerated by sequence A000045 of \cite{oeis}. 

\section{Open questions and further remarks}\label{sec:open}

There are many possible directions for the future study of the depth statistic. For permutations, we have characterized depth combinatorially (Theorem \ref{thm:char}), and shown that another statistic, descent drop, has the same distribution.  The uniform upper bound for these statistics is $\lfloor n^2/4\rfloor$, and we know how many permutations achieve this bound (Proposition \ref{prp:upper2}). Section \ref{sec:depthk} discusses an expression for the generating function for depth derived in \cite{GP}. 

In Observation \ref{obs:1}, we showed depth was bounded below by the average of length and reflection length. While in Section \ref{sec:coincidence} we characterized those permutations for which depth equals length and those for which depth equals reflection length, it still remains to characterize the permutations for which depth equals the average of length and reflection length.

\begin{ques}\label{quesll'}
Which permutations $w$ have $\dep(w) = (\ell_T(w)+\ell_S(w))/2$?
\end{ques}

We have enumerated these permutations for small values of $n$ and found the sequence begins: \[ 1,2,6,23, 103, 511, 2719, 15205,\ldots.\] These terms match no sequence in \cite{oeis}.  Unlike the settings we studied in this work, the permutations whose depths achieve this lower bound are not characterized by pattern avoidance.  For example,
$$\dep(3412) = 4 > (2+4)/2 = (\ell_T(3412) + \ell_S(3412))/2,$$
while
$$\dep(53412) = 6 = (4+8)/2 = (\ell_T(53412) + \ell_S(53412))/2.$$
One approach to Question \ref{quesll'} would be to try to modify the approach of \cite{GP} to get a better understanding of the joint distribution of depth and length and/or reflection length.

Another open problem is to examine depth more closely in other Coxeter groups. It would be nice to find useful characterizations of depth analogous to the ``sum of sizes of excedances" characterization we have given for permutations. Surely such a model should exist in type $B_n$, at least. 

\begin{ques}
What is the analogue of Theorem \ref{thm:char} for signed permutations in type $B_n$?
\end{ques}

\begin{table}
\[ \begin{array}{c | c c c c c c c c c c c c c c c c}
B_n & k=0 & 1 & 2 & 3 & 4 & 5 & 6 & 7 & 8 & 9 & 10 & 11 & 12 & 13 & 14 & 15 \\
\hline
n=1 & 1 & 1 \\
2 & 1 & 2 & 4 & 1\\
3 & 1 & 3 & 8 & 13 & 14 & 8 & 1 \\
4 & 1 & 4 & 13 & 29 & 55 & 66 & 90 & 53 & 60 & 12 & 1 \\
5 & 1 & 5 & 19 & 52 & 120 & 219 & 340 & 457 & 594 & 556 & 505 & 466 & 325 & 164 & 16 & 1
\end{array} \]
\bigskip
\caption{Depth distribution for $B_n$, that is, $|\{w \in B_n: \dep(w)=k\}|$.}\label{tab:BC}
\end{table}

We have the distributions of depth in $B_n$ for $n\leq 5$ in Table \ref{tab:BC}. It certainly appears that $\dep(w)$ is bounded above by $\binom{n+1}{2}$, although we have no proof of this. Also interesting is the number of elements of $B_n$ for which depth equals length. For $n =1,2,\ldots, 6$ we get:
\[ 2, 5, 14, 42, 132, 429 \]
so it is reasonable to guess that $|\{ w\in B_n : \dep(w) = \ell_S(w) \}|=\Cat_{n+1}$.  Stembridge showed in \cite[Proposition 5.9d)]{stem} that $\Cat_{n+1}$ counts the number of ``fully commutative top-and-bottom elements" of $B_n$. Perhaps this characterizes $\dep(w) = \ell_S(w)$ in type $B_n$, just as $\dep(w) = \ell_S(w)$ in $\mf{S}_n$ if and only if $w$ is fully commutative.

\bigskip

\noindent\textbf{Acknowledgements} We would like to thank Drew Armstrong, Anders Claesson, Ira Gessel, Einar Steingr\'imsson, John Stembridge, and Alex Woo for comments on an early draft of this paper. Thanks also to Timothy Walsh for his thoughts on the distribution of depth for permutations, and to the thoughtful suggestions of the anonymous referees.

\end{document}